\newcommand{\cB}{{\mathcal B}}
\newcommand{\cD}{{\mathcal D}}
\newcommand{\cF}{{\mathcal F}}
\newcommand{\cH}{{\mathcal H}}
\newcommand{\cK}{{\mathcal K}}
\newcommand{\bq}{{\mathbf q}}
\newcommand{\sbm}[1]{\left[\begin{smallmatrix} #1
		\end{smallmatrix}\right]}
\newcommand{\w}{{\omega}}
\newtheorem{thm}{Theorem}[section]
\newtheorem{corollary}[thm]{Corollary}
\newtheorem{lemma}[thm]{Lemma}
\theoremstyle{definition}
\newtheorem{definition}[thm]{Definition}
\newtheorem{remark}[thm]{Remark}
\newtheorem{example}[thm]{Example}
\numberwithin{equation}{section}
\def\textmatrix#1&#2\\#3&#4\\{\bigl({#1 \atop #3}\ {#2 \atop #4}\bigr)}
\def\dispmatrix#1&#2\\#3&#4\\{\left({#1 \atop #3}\ {#2 \atop #4}\right)}
\numberwithin{equation}{section}
\def\textmatrix#1&#2\\#3&#4\\{\bigl({#1 \atop #3}\ {#2 \atop #4}\bigr)}
\def\dispmatrix#1&#2\\#3&#4\\{\left({#1 \atop #3}\ {#2 \atop #4}\right)}
\begin{document}

\title[]{Models for $\bq$-commutative tuples of isometries}

\author[J. A. Ball]{Joseph A. Ball}
\address{Department of Mathematics, Virginia Tech, Blacksburg, VA 24061-0123, USA}
\email{joball@math.vt.edu}
\author[H. Sau]{Haripada Sau}
\address{Department of Mathematics, Indian Institute of Science Education and Research, Pashan, Pune, Maharashtra 411008, India} 
\email{hsau@iiserpune.ac.in, haripadasau215@gmail.com}
\subjclass[2010]{Primary: 47A13. Secondary: 47A20, 47A56, 30H10}
\keywords{Functional Model, Isometry, $q$-commutativity}
\thanks{The research of the second-named author is supported by DST-INSPIRE Faculty Fellowship DST/INSPIRE/04/2018/002458.}
\maketitle

\begin{abstract}
A pair of Hilbert space linear operators $(V_1,V_2)$ is said to be $q$-commutative, for a unimodular complex number $q$, if $V_1V_2=qV_2V_1$. A concrete functional model for $q$-commutative pairs of isometries is obtained. The functional model is parametrized by a collection of Hilbert spaces and operators acting on them. As a consequence, the collection serves as a complete unitary invariance for $q$-commutative pairs of isometries. A $q$-commutative operator pair $(V_1,V_2)$ is said to be doubly $q$-commutative, if in addition, it satisfies $V_2V_1^*=qV_1^*V_2$. Doubly $q$-commutative pairs of isometries are also characterized. Special attention is given to doubly $q$-commutative pairs of shift operators. The notion of $q$-commutativity is then naturally extended to the case of general tuples of operators to obtain a similar model for tuples of $q$-commutative isometries.
\end{abstract}
\section{Introduction}
A stepping stone to the development of model theory for contractive Hilbert space operators is what is known as the Wold decomposition: {\em every isometric operator $V$ acting on a Hilbert space $\cH$ is unitarily equivalent to the direct sum $S\oplus W$, where $W$ is a unitary operator and $S$ is a {\em shift} operator, i.e., $S$ is an isometry with $S^{*n}\to 0$, in the strong operator topology, as $n\to\infty$.} This is due to \cite{Halmos, Neumann} and \cite{Wold}. There has been numerous generalizations of this classical decomposition theorem. For example, see \cite{B-C-L, Slo1980} for development in the commutative setting and \cite{Sarkar2014, Slo1985} for doubly commutative setting; also see \cite{BBLPS, Burdak, GS, Pop1998, Pop2000, Pop2002, Pop2004, SZ} and references therein for results in this direction.

The objective of this paper is to further extend these decomposition results in the $q$-commutative and doubly $q$-commutative settings.
\begin{definition}
A pair $(V_1,V_2)$ of operators is said to be {\em$q$-commutative}, if
$$
V_1V_2=qV_2V_1.
$$
\end{definition}
Such pairs seem to be of significant importance in the area of quantum theory, see \cite{Connes, Majid, Prug}. Recently, $q$-commutative operators have been studied by some operator theorists. To mention some of these works, Bhat and Bhattacharyya \cite{Bhat-Bhattacharyya} studied $q$-commutative row contractions ($(T_1,T_2,\dots,T_d)$ (i.e., $T_iT_j=q(i,j)T_jT_i$ for each $i,j$ and $\sum_{i=1}^dT_iT_i^*\leq I$) in quest of its model. Later, Dey \cite{Dey} studied $q$-commutative row contractions for its dilation theory. In contrast to the consideration in this paper, $q(i,j)$ were allowed to be any non-zero complex numbers in both the papers \cite{Bhat-Bhattacharyya, Dey}. Recently, Keshari and Mallick \cite{KM2019} showed by a commutant lifting approach, that any $q$-commutative pair of contractive operators has a $q$-commutative unitary dilations, where $q$ is a unimodular complex number. Thus this is an extension of And\^o's dilation theorem \cite{ando} and that of Sebesty\'en \cite{Sebestyen}, where the result was proved for the case $q=-1$.

First, we note that unlike the commutative case, $q$-commutativity is `order-sensitive', i.e., if $(V_1,V_2)$ is $q$-commutative, then $(V_2,V_1)$ is $\overline q$-commutative. However, it follows from the definition that if $(V_1,V_2)$ is $q$-commutative, then so is $(V_1^*,V_2^*)$. For a concrete example of a $q$-commutative pair of isometries, let us choose a unimodular complex number $q$ and define the {\em rotation} operator $R_q$ on $H^2(\mathbb D^d)$, the Hardy space over the $d$-disk, as
\begin{align}\label{rotaion}
R_qf(\underline{z}):=f(q\underline{z})\text{ for all }f\in H^2(\mathbb D^d),
\end{align}where for $\underline{z}=(z_1,z_2,\dots,z_d)\in\mathbb D^d$, $q\underline{z}:=(qz_1,qz_2,\dots,qz_d)$. For each $j=1,2$, let $M_{z_j}$ denote the multiplication by `$z_j$' operator on $H^2(\mathbb D^2)$. Consider the pair on $H^2(\mathbb D^2)$
\begin{align}\label{Vj}
(V_1,V_2)=(R_{q}M_{z_1},M_{z_2}) \quad \text{or, } \quad (M_{z_1}R_{q},M_{z_2}).
\end{align}It is easy to verify that $(V_1,V_2)$ is a $q$-commutative pair of isometries. Let us note that if $R_q$ is the rotation operator on $H^2(\mathbb D)$ (simply denoted by $H^2$ in the sequel), then the rotation operator on $H^2(\mathbb D^d)$ is given by taking the $d$-fold tensor product of $R_q$. With a slight abuse of notation, {\em we use the same notation $R_q$ regardless of the dimension of the polydisk}. It follows easily that the rotation operator $R_q$ does not commute with $M_z$, the multiplication by `$z$' operator on $H^2$. Indeed, for every $f\in H^2$,
$$
R_qM_zf(z)=qzf(qz)=qM_zR_qf(z).
$$Thus $(R_q,M_z)$ is actually $q$-commutative.

For a Hilbert space $\cH$, the standard notation $\cB(\cH)$ is used to denote the algebra of bounded linear operators on $\cH$. Among several generalizations of the classical Wold decomposition, perhaps the most appealing is the one obtained by Berger, Coburn and Lebow \cite[Theorem 3.1]{B-C-L}. We extend the Berger-Coburn-Lebow program to the $q$-commutative setting: Our first main result shows that given Hilbert spaces $\cF$ and $\cK_u$, a projection $P$ and a unitary $U$ in $\cB(\cF)$, and a $q$-commutative pair of unitaries $(W_1,W_2)$ in $\cB(\cK_u)$, the pair
\begin{align}\label{Int:BCLmodel}
\left(\begin{bmatrix}R_q\otimes P^\perp U + M_zR_q\otimes P U &0\\0& W_1\end{bmatrix}, \begin{bmatrix}R_{\overline q}\otimes U^*P+R_{\overline q}M_z\otimes U^*P^\perp& 0\\0&W_2\end{bmatrix}\right)
\end{align}on $\sbm{H^2\otimes \cF\\\cK_u}$ is a $q$-commutative pair of isometries. And most importantly, for every $q$-commutative pair $(V_1,V_2)$ of isometries, there exists a collection $\{\cF,\cK_u;P,U,W_1,W_2\}$ of Hilbert spaces and operators as above such that $(V_1,V_2)$ is jointly unitarily equivalent to the model \eqref{Int:BCLmodel}. This is the content of Theorem \ref{T:qBCL}. Moreover, the correspondence between $q$-commutative pairs of isometries and the parameters $\{\cF,\cK_u;P,U,W_1,W_2\}$ is one-to-one in the sense explained in Theorem \ref{T:uniqueness}.

Recall that a commutative pair $(V_1,V_2)$ is said to be doubly commutative, if in addition, $V_2V_1^*=V_1^*V_2$. Let $(W_1,W_2)$ be a $q$-commutative pair of unitaries, i.e., $W_1W_2=qW_2W_1$. On multiplying $W_1^*$ from left and right successively, we see that $q$-commutativity of $(W_1,W_2)$ is equivalent to $W_2W_1^*=qW_1^*W_2$. In view of this, the following definition comes as a natural analogue of double commutativity.
\begin{definition}\label{D:Doublycomm}
A $q$-commutative pair of operators $(V_1,V_2)$ is said to be {\em doubly $q$-commutative}, if in addition, it satisfies
\begin{align}\label{double-q}
V_2V_1^*=qV_1^*V_2.
\end{align}
\end{definition}
We remark that if $V_1$ and $V_2$ are isometries satisfying just $V_2V_1^*=qV_1^*V_2$, then an easy computation shows that $(V_1V_2-qV_2V_1)^*(V_1V_2-qV_2V_1)=0$ and thus $V_1V_2=qV_2V_1$. Thus condition \ref{double-q} implies $q$-commutativity of $(V_1,V_2)$, if $V_1,V_2$ are isometries. The pair $(V_1,V_2)$ where each $V_j$ is as defined in \eqref{Vj} is an example of a doubly $q$-commutative pairs of isometries on $H^2(\mathbb D^2)$. However, it can be shown that the same pair when restricted to the space $H^2(\mathbb D^2)\ominus\{\text{constants}\}$, is not doubly $q$-commutative; this is explained in \S \ref{S:Examples}, where we discuss several other simple examples to illustrate the model theory. Theorem \ref{T:qdoublycomm} characterizes doubly $q$-commutative pairs of isometries.

As an application of the model theory, we exhibit a passage between commutative and $q$-commutative pairs of isometries. Similarly, we exhibit a way to go back and forth between the classes of doubly commutative and doubly $q$-commutative pairs of isometries. See Theorem \ref{T:comm&qcomm} and Theorem \ref{T:dcomm&dqcomm} for these connections. As a consequence of these correlations,  we show in Corollary \ref{C:DqCommShift} that given a doubly $q$-commutative pair of shift operators $(V_1,V_2)$, there is a unitary $\mathfrak s_q$ on $H^2(\mathbb D^2)$ such that $(V_1,V_2)$ is jointly unitarily equivalent to $(M_{z_1}\mathfrak s_q, \mathfrak s_{\overline q}M_{z_2})$ on $H^2(\mathbb D^2)$. This is an analogue of S\l oci\'nski \cite{Slo1980} who showed that every doubly commutative pair of shift operators is unitarily equivalent to $(M_{z_1},M_{z_2})$ on $H^2(\mathbb D^2)$.

The notion of $q$-commutativity is naturally extended to the case of general tuples of operators, see Definition \ref{D:tuplesq}. This model theory for the pair case is then applied to the case of a general $d$-tuple ($d>2$) of $q$-commutative isometries to obtain a similar model -- see Theorem \ref{T:bqBCL}. In \S \ref{S:Extension} we show that every $q$-commutative tuple of isometries $(X_1,X_2,\dots,X_d)$ can be extended to a $q$-commutative tuple of unitaries $(Y_1,Y_2,\dots, Y_d)$ (and hence doubly $q$-commutative) such that the unitary $Y_1Y_2\cdots Y_d$ is the minimal unitary extension of the isometry $X_1X_2\cdots X_d$. This both improves and gives a new proof of the `dilation' result of \cite{Jeu_Pinto} where it was shown that every doubly $q$-commutative tuple of isometries extends to a doubly $q$-commutative tuple of unitaries.

\section{Functional models for $q$-commutative pairs of isometries}
We begin with the following lemma which will be used in our quest for a functional model of $q$-commutative pairs of isometries. For a contraction $T$, we use the following standard notations for the {\em defect operator} and the {\em defect space} of $T$:
\begin{align*}
    D_{T}=(I-T^*T)^{1/2} \quad \mbox{and}\quad \cD_{T^*}=\overline{\operatorname{Ran}}\;D_T.
\end{align*}
\begin{lemma}\label{justlikethat} Let $(V_1,V_2)$ be a $q$-commutative pair of isometries on a Hilbert space $\mathcal H$ and $V=V_1V_2$. Then
\begin{itemize}
\item[(i)]\begin{align}\label{spaceequality}
\left\{\sbm{D_{V^*_1}V_2^*\\ D_{V^*_2}}h: h\in\mathcal H \right\}=\sbm{\mathcal D_{V_1^*}\\\mathcal D_{V_2^*}}=\left\{\sbm{D_{V^*_1}\\ D_{V^*_2}V^*_1}h: h \in \mathcal H\right\};
 \end{align}
 \item[(ii)] the defect space $\cD_{V^*}$ is unitarily equivalent to $\sbm{\cD_{V_1^*}\\ \cD_{V_2^*}}$ via the unitary
 \begin{align}
     D_{V^*}h\mapsto \begin{bmatrix}
     D_{V_1^*}\\ D_{V_2^*}V_1^*
     \end{bmatrix}h;\quad\mbox{and}
 \end{align}
 \item[(iii)] for every $j\geq1$,
\begin{align}\label{Intj}
V^{*j} V_1=q^{j-1}V_2^*V^{*j-1}\text{ and }V^{*j}V_2=\overline q^jV_1^*V^{*j-1}.
\end{align}
\end{itemize}
\end{lemma}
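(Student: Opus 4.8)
The plan is to establish the three assertions in order, deriving (ii) and (iii) from (i) and its ingredients. Throughout, the key structural fact I would exploit is that $V=V_1V_2=qV_2V_1$ is itself an isometry, so $I-VV^*$ is the projection onto $\cH\ominus V\cH = \ker V^*$, and that $V_1,V_2$ individually are isometries with $V_1^*V_1=V_2^*V_2=I$, whence $D_{V_1^*}^2=I-V_1V_1^*$ and $D_{V_2^*}^2=I-V_2V_2^*$ are the orthogonal projections onto $\ker V_1^*$ and $\ker V_2^*$ respectively. I would also record at the outset the elementary commutation identities that follow from $V_1V_2=qV_2V_1$ by taking adjoints and multiplying by $V_1^*$ or $V_2^*$ on suitable sides: $V_2^*V_1=\overline q V_1V_2^*$, $V_2 V_1^* = q V_1^* V_2$ when one has the doubly-$q$-commuting hypothesis --- but here I only get the one relation $V_1V_2=qV_2V_1$, so I must be careful to use only $V_1^*V_2^* = \overline q\, V_2^*V_1^*$ and its consequences, not the double-commutativity relation.

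For part (i), the middle space $\sbm{\cD_{V_1^*}\\\cD_{V_2^*}}$ is by definition the closure of the range of the diagonal operator $\operatorname{diag}(D_{V_1^*},D_{V_2^*})$. To prove the left-hand equality I would show the column operator $h\mapsto\sbm{D_{V_1^*}V_2^*h\\D_{V_2^*}h}$ has the same range closure. The containment $\subseteq$ is clear since $D_{V_1^*}V_2^*h\in\cD_{V_1^*}$ and $D_{V_2^*}h\in\cD_{V_2^*}$. For the reverse, the natural device is to compute the "column norm": for any $h$,
\begin{align*}
\|D_{V_1^*}V_2^*h\|^2 + \|D_{V_2^*}h\|^2 &= \|V_2^*h\|^2-\|V_1^*V_2^*h\|^2 + \|h\|^2 - \|V_2^*h\|^2\\
&= \|h\|^2 - \|V_1^*V_2^*h\|^2 = \|h\|^2 - \|\overline q\, V^*h\|^2 = \|D_{V^*}h\|^2,
\end{align*}
using $V_1^*V_2^* = \overline q(V_1V_2)^*\cdot$ --- wait, $(V_1V_2)^* = V_2^*V_1^*$, not $V_1^*V_2^*$; so I instead use the \emph{right} column $\sbm{D_{V_1^*}h\\D_{V_2^*}V_1^*h}$ for which $\|D_{V_1^*}h\|^2+\|D_{V_2^*}V_1^*h\|^2 = \|h\|^2-\|V_1^*h\|^2+\|V_1^*h\|^2-\|V_2^*V_1^*h\|^2 = \|h\|^2-\|V^*h\|^2 = \|D_{V^*}h\|^2$, and symmetrically the left column pairs with $\tilde V = V_2V_1 = \overline q V$, giving $\|D_{V_1^*}V_2^*h\|^2+\|D_{V_2^*}h\|^2 = \|h\|^2 - \|V_2^*V_1^*h\|^2$ as well after using $V_1^*V_2^*\cdot$; I will sort out which of $V_2^*V_1^*$ or $V_1^*V_2^*$ appears by keeping the bookkeeping $(V_1V_2)^*=V_2^*V_1^*$ and $(V_2V_1)^*=V_1^*V_2^*$ straight, and since $V_1V_2$ and $V_2V_1$ differ by the unimodular scalar $q$ their adjoints have the same norm, so in the end both column-norm identities read $\|\cdot\|^2 = \|D_{V^*}h\|^2$. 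Then a standard argument (density/closed-range) shows that a unit-norm-preserving intertwining between a dense-range map and $D_{V^*}$ forces the range closures to match the claimed fixed space $\sbm{\cD_{V_1^*}\\\cD_{V_2^*}}$; more directly, each of the three operators $D_{V^*}$, left column, right column has the same "modulus," and comparing with $\operatorname{diag}(D_{V_1^*},D_{V_2^*})$ shows all four ranges have the same closure.

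Part (ii) is then essentially free: the computation $\|\sbm{D_{V_1^*}h\\D_{V_2^*}V_1^*h}\|^2 = \|D_{V^*}h\|^2$ shows the map $D_{V^*}h\mapsto\sbm{D_{V_1^*}h\\D_{V_2^*}V_1^*h}h$ (acting on the dense subset $\{D_{V^*}h\}$ of $\cD_{V^*}$) is a well-defined isometry, and by part (i) its range closure is all of $\sbm{\cD_{V_1^*}\\\cD_{V_2^*}}$, so it extends to the claimed unitary. For part (iii) I would argue by induction on $j$. The base case $j=1$ is $V^*V_1 = V_2^*$ and $V^*V_2 = \overline q V_1^*$, both of which follow directly: $V^*V_1 = V_2^*V_1^*V_1 = V_2^*$, and $V^*V_2 = V_2^*V_1^*V_2 = \overline q V_2^*V_2^*V_1^*\cdot$? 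No --- better: from $V_1V_2=qV_2V_1$ we get $V_1^*V_2 = \overline q V_2 V_1^*$ only under double commutativity, which I do not have; instead use $V=qV_2V_1$ so $V^* = \overline q V_1^*V_2^*$, hence $V^*V_2 = \overline q V_1^*V_2^*V_2 = \overline q V_1^*$, and $V^*V_1 = V_2^*V_1^*V_1 = V_2^*$ from $V=V_1V_2$; both use only the single relation in its two equivalent forms $V=V_1V_2=qV_2V_1$. The inductive step multiplies the identity for $j$ on the left by $V^*$ and applies the $j=1$ identities again, tracking the powers of $q$: $V^{*(j+1)}V_1 = V^*(V^{*j}V_1) = q^{j-1}V^*V_2^*V^{*(j-1)} = q^{j-1}(V^*V_2^*)V^{*(j-1)}$, and $V^*V_2^* = \overline q V_1^*V_2^*V_2^* $? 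I would rather push the scalar through using $V^* = \overline q V_1^* V_2^*$ directly on the relevant monomial, which is a routine bookkeeping of unimodular scalars. The main obstacle throughout is precisely this scalar/adjoint bookkeeping: because $q$-commutativity is order-sensitive one must never accidentally invoke the double-commutativity relation $V_2V_1^* = qV_1^*V_2$, so every manipulation must be reduced to the two forms $V = V_1V_2 = qV_2V_1$ and their adjoints $V^* = V_2^*V_1^* = \overline q\, V_1^*V_2^*$, and the powers of $q$ in part (iii) must be matched by induction with care.
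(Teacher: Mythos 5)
Your norm identities are exactly the ones the paper records (its equations \eqref{key1}--\eqref{aux6}), and they do give that $h\mapsto\sbm{D_{V_1^*}\\ D_{V_2^*}V_1^*}h$ has the same modulus as $D_{V^*}$, hence that the map in (ii) is a well-defined isometry of $\cD_{V^*}$ into $\sbm{\cD_{V_1^*}\\ \cD_{V_2^*}}$. The genuine gap is in part (i), which is precisely the statement that the two column maps have dense range in the \emph{full} direct sum. Equality of moduli with $D_{V^*}$ only identifies the closure of a column's range with a copy of $\cD_{V^*}$ sitting inside $\cD_{V_1^*}\oplus\cD_{V_2^*}$; it says nothing about whether that copy is everything (an isometry into a Hilbert space need not have dense range), and your ``more directly, comparing with $\operatorname{diag}(D_{V_1^*},D_{V_2^*})$'' step does not repair this: that diagonal operator acts on $\cH\oplus\cH$ and has a different modulus altogether, so no range comparison follows. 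As written, neither the reverse inclusion in \eqref{spaceequality} nor the surjectivity needed to upgrade the isometry in (ii) to a unitary is actually proved, and (ii) in your write-up explicitly leans on (i) for that surjectivity.

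What is missing is the annihilator argument the paper uses. Take $f\oplus g\in\cD_{V_1^*}\oplus\cD_{V_2^*}$ orthogonal to every $\sbm{D_{V_1^*}V_2^*\\ D_{V_2^*}}h$; since $D_{V_1^*}f=f$ and $D_{V_2^*}g=g$ (the defect operators of the coisometries are projections), this reads $\langle h, V_2f+g\rangle=0$ for all $h$, so $g=-V_2f$. Then $g=D_{V_2^*}g=-(I-V_2V_2^*)V_2f=0$ because $D_{V_2^*}$ is the projection onto $(\operatorname{Ran}V_2)^\perp$, and $f=0$ since $V_2$ is an isometry; the symmetric argument (producing $f=-V_1g$) handles the other column. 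Note this step uses no commutation relation at all, so your concern about accidentally invoking double $q$-commutativity is not where the difficulty lies. Part (iii) of your proposal is essentially fine: the base case is correct, and the induction closes once you record $V^*V_2^*=qV_2^*V^*$ (equivalently $V_1^*V_2^*=qV_2^*V_1^*$, the adjoint of the defining relation); the paper instead computes directly with $(V_2^*V_1^*)^j$, which amounts to the same bookkeeping.
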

\begin{proof}
We only establish the first equality in \eqref{spaceequality}, the proof of the second equality is similar. We use the general fact that if $V$ is an isometry, then $D_{V^*}$ is the projection onto ${\operatorname{Ran} V}^\perp$. Let $f\oplus g \in \mathcal D_{V_1^*}\oplus\mathcal D_{V_2^*}$ be such that
$$\langle D_{V^*_1}V_2^*h\oplus D_{V^*_2}h, f\oplus g \rangle=0 \text{ for all $h\in\mathcal H$}.$$
This is equivalent to $\langle D_{V^*_1}V_2^*h, f\rangle + \langle D_{V^*_2}h, g\rangle =0 \text{ for all $h\in\mathcal H$}$, or, equivalently, $\langle h, V_2f\rangle + \langle h, g\rangle =0 \text{ for all $h\in\mathcal H$}$. Consequently, $g=-V_2f$, which implies that $g=D_{V_2^*}g=-(I-V_2V_2^*)V_2f=0$ and since $V_1$ is an isometry $f$ must also be $0$. This proves (i).

For (ii), we note that
\begin{align}\label{key1}
D_{V^{*}}^2=I-VV^*&=I-V_1V_1^*+V_1V_1^*-V_1V_2V_2^*V_1^*= D_{V_1^*}^2+V_1D_{V_2^*}^2V_1^*\\\label{key2}
&=I-V_2V_2^*+V_2V_2^*-V_2V_1V_1^*V_2^*=V_2D_{V_1^*}^2V_2^*+D_{V_2^*}^2.
\end{align}
This implies that for every vector $h$ in $\cH$,
\begin{align}\label{aux6}
\|D_{V^*}h\|^2=\|D_{V_1^*}V_2^*h\|^2+ \|D_{V_2^*}h\|^2=\|D_{V_1^*}h\|^2+\|D_{V_2^*}V_1^*h\|^2.
\end{align}Therefore to show that $\cD_{V^*}$ is isomorphic to $\sbm{\cD_{V_1^*}\\ \cD_{V_2^*}}$, we can consider the map
\begin{align}\label{Lambda}
D_{V^*}h\mapsto \begin{bmatrix}D_{V_1^*}\\D_{V_2^*}V_1^*\end{bmatrix}h \text{ for every }h\in\cH.
\end{align}Note that this is an isometry by \eqref{aux6} and surjective by part (ii) of the lemma.

For the intertwining relations \eqref{Intj}, we see that for every $j\geq1$,
 \begin{align}\label{aux2}
V^{*j} V_1=(V_2^*V_1^*)^jV_1=V_2^*(V_1^*V_2^*)^{j-1}=q^{j-1}V_2^*V^{*j-1}
\end{align} and
\begin{align}\label{aux3}
V^{*j} V_2=(V_2^*V_1^*)^jV_1=\overline q^j(V_1^*V_2^*)^jV_2=\overline q^jV_1^*(V_2^*V_1^*)^{*j-1}=\overline q^j V_1^*V^{* j-1}.
\end{align}
This completes the proof of the lemma.
\end{proof}
Now for the main theorem of this section, let us recall that the rotation operator $R_q$ is the unitary defined on $H^2$ as
$$
R_q:f(z)\mapsto f(qz).
$$
\begin{thm}\label{T:qBCL}
Let $V_1$ and $V_2$ be two operators acting on a Hilbert space $\cH$. Then the following are equivalent.
 \begin{itemize}
 \item[(i)] {\bf $q$-commutativity:} The pair $(V_1,V_2)$ is $q$-commutative;
 \item[(ii)] {\bf BCL-1 $q$-model:} There exist Hilbert spaces $\cF$ and $\cK_u$, a projection $P$ and a unitary $U$ in $\cB(\cF)$, and a pair $(W_1,W_2)$ of $q$-commuting unitaries in $\cB(\cK_u)$ such that $(V_1,V_2)$ is unitarily equivalent to
\begin{align}\label{iso-model1}
\left(\sbm{R_q\otimes P^\perp U + M_zR_q\otimes P U &0\\0& W_1}, \sbm{R_{\overline q}\otimes U^*P+R_{\overline q}M_z\otimes U^*P^\perp& 0\\0&W_2}\right) \text{ on }\sbm{H^2\otimes \cF\\\cK_u}.
\end{align}Moreover, the tuple $(\cF,\cK_u;P,U,W_1,W_2)$ can be chosen to be such that
\begin{align}\label{ExplctTuple1}
\begin{cases}
&\cF=\sbm{\cD_{V_1^*}\\\cD_{V_2^*}}, \quad \cK_u=\bigcap_{n\geq 0}(V_1V_2)^n\cH,\quad P:\sbm{f\\g}\mapsto\sbm{f\\0},\\
& U: \sbm{D_{V_1^*}\\D_{V_2^*}V_1^*}\mapsto \sbm{D_{V_1^*}V_2^*\\D_{V_2^*}}\text{ and } (W_1,W_2)=(V_1,V_2)|_{\cK_u},
\end{cases}
 \end{align}and the unitary operator $\tau_{\rm BCL}:\cH\to\sbm{H^2\otimes \cF\\\cK_u}$ can be chosen to be such that
 \begin{align}\label{tau'}
\tau_{\rm BCL} h= \begin{bmatrix}
D_{V_1^*}\\ D_{V_2^*}V_1^*
\end{bmatrix}(I-zV^*)^{-1}h\oplus\lim_{n\to\infty}(V_1V_2)^n(V_2^*V_1^*)^n h;
 \end{align}
 \item[(iii)] {\bf BCL-2 $q$-model:} There exist Hilbert spaces $\cF_\dag$ and $\cK_{u\dag}$, a projection $P_\dag$ and a unitary $U_\dag$ in $\cB(\cF_\dag)$, and a pair $(W_{1\dag},W_{2\dag})$ of $q$-commuting unitaries in $\cB(\cK_{u\dag})$ such that $(V_1,V_2)$ is unitarily equivalent to
\begin{align}\label{iso-model2}
\left(\sbm{R_q\otimes U_\dag^*P_\dag^\perp + M_zR_q\otimes U_\dag^*P_\dag&0\\0&W_{1\dag}}, \sbm{R_{\overline q}\otimes P_\dag U_\dag+R_{\overline q}M_z\otimes P_\dag^\perp U_\dag&0\\0&W_{2\dag}}\right) \text{ on }\sbm{H^2\otimes \cF_\dag\\\cK_{u\dag}}.
\end{align}Moreover, the tuple $(\cF_\dag,\cK_{u\dag};P_\dag,U_\dag,W_{1\dag},W_{2\dag})$ can be chosen to be such that
\begin{align}\label{ExplctTuple2}
(\cF_\dag,\cK_{u\dag};P_\dag,U_\dag,W_{1\dag},W_{2\dag})=(\cF,\cK_u;P,U^*,W_1,W_2),
\end{align}where $(\cF,\cK_u;P,U,W_1,W_2)$ is as in item (i) above, and the unitary operator $\tau_\dag:\cH\to \sbm{H^2\otimes \cF_\dag\\\cK_{u\dag}}$ can be chosen to be as in \eqref{tau'}.
\end{itemize}
\end{thm}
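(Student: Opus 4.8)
The plan is to establish $(i)\Leftrightarrow(ii)$ directly and then obtain $(iii)$ from $(ii)$ by a change of coordinates. The implication $(ii)\Rightarrow(i)$ is a short computation: multiplying out the two operators in \eqref{iso-model1}, the identities $R_qR_{\overline q}=I$, $UU^*=I$, $PP^\perp=P^\perp P=0$ and $R_{\overline q}M_zR_q=\overline q\,M_z$ (equivalently $R_qM_z=qM_zR_q$) show that the shift summands compose to $M_z\otimes I_\cF$ in the order $V_1V_2$ and to $\overline q\,(M_z\otimes I_\cF)$ in the order $V_2V_1$, while $(W_1,W_2)$ is $q$-commutative by hypothesis; hence $V_1V_2=qV_2V_1$. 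For $(ii)\Leftrightarrow(iii)$ I would observe that conjugation by the unitary $(I_{H^2}\otimes U)\oplus I_{\cK_u}$ carries the model \eqref{iso-model1} with data $(\cF,\cK_u;P,U,W_1,W_2)$ onto the model \eqref{iso-model2} with data $(\cF,\cK_u;P,U^*,W_1,W_2)$ (a direct computation using $U(AU)U^*=UA$); and since $U$ sends $\sbm{D_{V_1^*}\\ D_{V_2^*}V_1^*}g$ to $\sbm{D_{V_1^*}V_2^*\\ D_{V_2^*}}g$ by definition, the resulting intertwiner is again of the form \eqref{tau'} but with the other defect block, giving \eqref{ExplctTuple2}. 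Thus it remains to prove $(i)\Rightarrow(ii)$ with the explicit data \eqref{ExplctTuple1} and the unitary \eqref{tau'}.

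So let $(V_1,V_2)$ be $q$-commutative isometries and put $V=V_1V_2$, an isometry. From $q$-commutativity one gets $V_1V=qVV_1$ and $V_2V=\overline qVV_2$, hence $V_jV^n\cH\subseteq V^n\cH$; since moreover $V=V_1V_2$ maps $\cK_u=\bigcap_{n\ge0}V^n\cH$ onto itself and $V_1,V_2$ are isometries, each $V_j$ must map $\cK_u$ onto $\cK_u$, so $(W_1,W_2):=(V_1,V_2)|_{\cK_u}$ is a $q$-commutative pair of unitaries. By Lemma \ref{justlikethat}(i) the domain and range of the assignment defining $U$ in \eqref{ExplctTuple1} both equal $\cF:=\sbm{\cD_{V_1^*}\\ \cD_{V_2^*}}$, and by \eqref{aux6} this assignment is isometric; hence $U$ is a unitary on $\cF$, and (with $P=\sbm{I&0\\0&0}$) the operators in \eqref{iso-model1} are well-defined. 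Finally, $\tau_{\rm BCL}$ of \eqref{tau'} is nothing but the Wold-decomposition unitary $h\mapsto\bigoplus_{n\ge0}z^nD_{V^*}V^{*n}h\oplus\lim_nV^nV^{*n}h$ of the isometry $V$, post-composed with $(I_{H^2}\otimes\Lambda)\oplus I_{\cK_u}$, where $\Lambda$ is the unitary of Lemma \ref{justlikethat}(ii); so $\tau_{\rm BCL}$ is a unitary of $\cH$ onto $\sbm{H^2\otimes\cF\\ \cK_u}$, its norm identity and surjectivity coming straight from the Wold decomposition.

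The heart of the proof is the two intertwining identities $\tau_{\rm BCL}V_j=\widehat V_j\,\tau_{\rm BCL}$ ($j=1,2$), where $\widehat V_j$ is the $j$th operator in \eqref{iso-model1}. On the $\cK_u$-summand these are immediate, since there $\tau_{\rm BCL}$ is the inclusion into the second slot and $\widehat V_j$ acts as $W_j$. On the $H^2\otimes\cF$-summand I would expand both sides as power series in $z$ and match coefficients. Writing $\tau_{\rm BCL}h=\sum_m z^m\eta_m\oplus(\cdots)$ with $\eta_m=\sbm{D_{V_1^*}V^{*m}h\\ D_{V_2^*}V_1^*V^{*m}h}$, the right-hand side becomes $\sum_{n\ge0}q^nz^nP^\perp U\eta_n+\sum_{n\ge1}q^{n-1}z^nPU\eta_{n-1}$ (using $R_q(z^m\eta_m)=q^mz^m\eta_m$), and the defining relation of $U$ turns $U\eta_m$ into $\sbm{D_{V_1^*}V_2^*V^{*m}h\\ D_{V_2^*}V^{*m}h}$; for the left-hand side one uses the intertwining relations \eqref{Intj}, the isometry identity $D_{V_1^*}=I-V_1V_1^*$ (so $D_{V_1^*}V_1=0$), and $V_1^*V_2^*=(V_2V_1)^*=qV^*$. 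The two resulting series then agree coefficient by coefficient. For $j=2$ the same scheme works, now invoking \eqref{Intj} for $V_2$ and rewriting $V^{*n}=V_2^*V_1^*V^{*(n-1)}$ so as to exhibit the occurring vectors as images of $U$, allowing $U^*$ to be applied.

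I expect the genuine difficulty to lie entirely in the bookkeeping of the unimodular scalars $q^k,\overline q^{\,k}$ in these two coefficient matchings; conceptually nothing is needed beyond the Wold decomposition of $V=V_1V_2$, the identification $\cD_{V^*}\cong\sbm{\cD_{V_1^*}\\ \cD_{V_2^*}}$, and the relations \eqref{Intj}, all furnished by Lemma \ref{justlikethat}, so no separate analysis of the analytic-Toeplitz form of $V_j$ on the shift part is required. (Alternatively one could first note that $V_1V=qVV_1$ forces $V_1$ on the shift part to be $R_q$ times an analytic Toeplitz operator and then run the Berger--Coburn--Lebow normalization of its symbol, but verifying \eqref{tau'} directly is shorter.)
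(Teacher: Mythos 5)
Your argument for the core equivalence $(i)\Leftrightarrow(ii)$ is essentially the paper's: Wold decomposition of $V=V_1V_2$, the identification $\cD_{V^*}\cong\sbm{\cD_{V_1^*}\\ \cD_{V_2^*}}$ and the relations \eqref{Intj} from Lemma \ref{justlikethat}, and then a coefficient-by-coefficient verification of the two intertwinings, using exactly the right identities ($D_{V_1^*}V_1=0$, $V_1^*V_2^*=qV^*$, and rewriting $V^{*n}=V_2^*V_1^*V^{*(n-1)}$ so that $U^*$ can be applied); this matches the paper's computation step for step. Two remarks. First, where you differ: you verify the intertwining globally against the full Wold unitary \eqref{tau'} rather than first reducing, as the paper does, to the case where $V$ is a shift by showing each $\tilde V_j$ is block diagonal. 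For this your check of the $\cK_u$-component must be done for \emph{all} $h\in\cH$, not just $h\in\cK_u$: the second component of $\tau_{\rm BCL}V_jh$ is $P_{\cK_u}V_jh$ while that of $\widehat V_j\tau_{\rm BCL}h$ is $V_jP_{\cK_u}h$, so you need $\cK_u$ to \emph{reduce} each $V_j$. This is a one-line consequence of what you already proved ($V_j\cK_u=\cK_u$ with $V_j$ isometric forces $V_j(\cK_u^\perp)\subseteq\cK_u^\perp$), and it is precisely the content of the paper's block-diagonality step, but as written your "immediate on the $\cK_u$-summand" only covers $h\in\cK_u$; make the reducing property explicit. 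Second, your treatment of $(ii)\Leftrightarrow(iii)$ by conjugation with $(I_{H^2}\otimes U)\oplus I_{\cK_u}$ is a genuinely different (and cleaner) route than the paper's "similar computation", and it is consistent with Remark \ref{R:BCLtuples}; it yields the data \eqref{ExplctTuple2} with intertwiner $((I_{H^2}\otimes U)\oplus I_{\cK_u})\tau_{\rm BCL}$, i.e.\ the map $h\mapsto\sbm{D_{V_1^*}V_2^*\\ D_{V_2^*}}(I-zV^*)^{-1}h\oplus\lim_n V^nV^{*n}h$, which is the analogue of \eqref{tau'} with the other defect block rather than \eqref{tau'} verbatim; you should state this explicitly, since the intertwiner of the BCL-2 model is not literally the same operator as $\tau_{\rm BCL}$. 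With these two points spelled out, the proposal is correct.
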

\begin{proof}[Proof of $(i)\Leftrightarrow (ii)$] We first show that the pair in \eqref{iso-model1} is a $q$-commuting pair of isometries. To that end, let $\xi$ be in $\cF$ and $n$ be a non-negative integer. We see that
\begin{align*}
(R_{\overline q}\otimes U^*P+R_{\overline q}M_z\otimes U^*P^\perp)(z^n\otimes \xi)=\overline q^nz^n\otimes U^*P \xi +\overline q^{n+1}z^{n+1}\otimes U^*P^\perp\xi
\end{align*}and therefore
\begin{align}
&(R_{ q}\otimes P^\perp U + M_zR_{ q}\otimes P U)( R_{\overline q}\otimes U^*P+R_{\overline q}M_z\otimes U^*P^\perp)(z^n\otimes \xi)\notag\\
=&(R_{ q}\otimes P^\perp U + M_zR_{ q}\otimes P U)(\overline q^nz^n\otimes U^*P \xi +\overline q^{n+1}z^{n+1}\otimes U^*P^\perp\xi)\notag\\
=&z^{n+1}\otimes P^\perp \xi + z^{n+1}\otimes P\xi =(M_z\otimes I_{\cF})(z^n\otimes \xi).\label{aux1}
\end{align}On other hand,
\begin{align*}
(R_{ q}\otimes P^\perp U + M_zR_{ q}\otimes P U)(z^n\otimes \xi)= q^nz^n\otimes P^\perp U \xi + q^nz^{n+1}\otimes PU\xi
\end{align*}and hence
\begin{align}
\notag &( R_{\overline q}\otimes U^*P+R_{\overline q}M_z\otimes U^*P^\perp)(R_{ q}\otimes P^\perp U + M_zR_{ q}\otimes P U)(z^n\otimes \xi)\\
\notag=&( R_{\overline q}\otimes U^*P+R_{\overline q}M_z\otimes U^*P^\perp)( q^nz^n\otimes P^\perp U \xi + q^nz^{n+1}\otimes PU\xi)\\\label{forgot}
=& \overline qz^{n+1}\otimes U^*PU\xi + \overline qz^{n+1}\otimes U^*P^\perp U\xi=\overline q(M_z\otimes I_{\cF})(z^n\otimes\xi).
\end{align}
From equations \eqref{aux1} and \eqref{forgot} therefore follows the $q$-commutativity of the BCL-1 $q$-model \eqref{iso-model1}. It remains to show that the entries of the BCL-1 $q$-model are isometries. But this is clear because the BCL-1 $q$-model is of the form
 $$
 \left( \begin{bmatrix}M_{(P^\perp + z P)U}&0 \\0&W_{1}\end{bmatrix}\begin{bmatrix}R_q&0\\0&I_{\cK_u}\end{bmatrix},
 \begin{bmatrix}R_{\overline q}&0\\0&I_{\cK_u}\end{bmatrix}\begin{bmatrix}M_{U^*(P + z P^\perp)} &0\\0& W_{2}\end{bmatrix}\right),
 $$and that the operators (neither $q$-commutative nor $q$-commutative)
 $$
\begin{bmatrix}M_{(P^\perp + z P)U}&0 \\0&W_{1}\end{bmatrix}\quad \mbox{and}\quad\begin{bmatrix}M_{U^*(P + z P^\perp)} &0\\0& W_{2}\end{bmatrix}
 $$
 are isometries. Now it follows from the fact that the product of an isometry and a unitary is always an isometry. Therefore $(ii)\Rightarrow(i)$.

We now establish the direction $(i)\Rightarrow(ii)$. Let us denote the isometry $V:=V_1V_2=qV_2V_1$. By Wold decomposition $V$ is unitarily equivalent to
 $$
 \begin{bmatrix}
 M_z&0\\0&W
 \end{bmatrix}:\begin{bmatrix}H^2(\cD_{V^*})\\ \cK_u\end{bmatrix}\to \begin{bmatrix}H^2(\cD_{V^*})\\ \cK_u\end{bmatrix}
 $$via the unitary
 \begin{align}\label{WoldUnitary}
     h\mapsto 
     \begin{bmatrix}
     D_{V^*}(I-zV^*)^{-1}h\\
     \lim_{n} V^nV^{*n}h
     \end{bmatrix}.
 \end{align}Here $W$ is a unitary operator on $\cK_u=\cap_{n\geq0}V^n\cH$. We first note that the subspace $\cK_u$ is invariant under both $V_1$ and $V_2$. We make use of the following {\em$q$-intertwining} relations, which are easy to establish:
 $$
V_1V^n=q^nV^nV_1 \text{ and }V_2V^n=\overline q^nV^n V_2 \text{ for every }n\geq 1.
 $$Let us suppose that for every $n\geq 0$, $g=V^nh_n$ for some $h_n\in\cH$. Then
 \begin{align*}
 V_1g=V_1 V^nh_n=V^n(q^nV_1h_n) \text{ and }
 V_2g=V_2V^nh_n=V^n(\overline q^nV_2h_n).
 \end{align*}Therefore $\cK_u$ is jointly $(V_1,V_2)$-invariant. So for each $j=1,2$, the avatar of $V_j$ on  $H^2(\cD_{V^*})\oplus \cK_u$ is of the form
 $$
 \tilde V_j=\begin{bmatrix}V^j_{11}&0\\ V^j_{21}&V^j_{22}\end{bmatrix}.
 $$ Note that $(V^1_{22},V^2_{22})$ is a pair of $q$-commuting isometries such that
 $$
W= V^1_{22}V^2_{22}=qV^2_{22}V^1_{22}.
 $$Since $W$ is a unitary, the pair $(V_{22}^1,V^2_{22})$ must be a $q$-commutative pair of unitaries -- a fact that trivially follows from \eqref{aux6} when applied to the pair $(V_{22}^1,V^2_{22})$. Therefore, each $\tilde V_j$ must be a block diagonal matrix. Consequently, it is enough to assume -- as we do for the rest of the proof -- that $V=V_1V_2$ is a shift. Therefore the operator $\tau_{\rm BCL}:\cH\to H^2\left(\sbm{\cD_{V_1^*}\\\cD_{V_2^*}}\right)$ defined as
\begin{align}\label{tau}
\tau_{\rm BCL} h= \sbm{D_{V_1^*}\\ D_{V_2^*}V_1^*}h +z\sbm{D_{V_1^*}\\ D_{V_2^*}V_1^*}V^*h+z^2\sbm{D_{V_1^*}\\ D_{V_2^*}V_1^*}V^{* 2}h+\cdots
\end{align}is a unitary and satisfies $\tau_{\rm BCL} V=M_z\tau_{\rm BCL}$.
To establish the unitary equivalence in part (ii) of the theorem, we use \eqref{Intj} to first note that for every $h\in\cH$
\begin{align*}
\tau_{\rm BCL} V_1h= \sbm{D_{V_1^*}\\ D_{V_2^*}V_1^*}V_1h +z\sbm{D_{V_1^*}\\ D_{V_2^*}V_1^*}V^*V_1h+z^2\sbm{D_{V_1^*}\\ D_{V_2^*}V_1^*}V^{* 2}V_1h+\cdots
\end{align*} is the same as
\begin{align*}
\sbm{0\\D_{V_2^*}}h+z\sbm{D_{V_1^*}V_2^*\\ D_{V_2^*}V_1^*V_2^*}h+qz^2\sbm{D_{V_1^*}V_2^*\\ D_{V_2^*}V_1^*V_2^*} V^*h+q^2z^3\sbm{D_{V_1^*}V_2^*\\ D_{V_2^*}V_1^*V_2^*} V^{*2}h+\cdots,
\end{align*}which we split in two parts as
\begin{align*}
&\left(\sbm{0\\D_{V_2^*}}h+qz\sbm{0\\D_{V_2^*}}V^*h +q^2z^2\sbm{0\\D_{V_2^*}}V^{*2}h+\cdots\right) \\
&+z\left( \sbm{D_{V_1^*}V_2^*\\0}h+qz \sbm{D_{V_1^*}V_2^*\\0}V^*h+ (qz)^2\sbm{D_{V_1^*}V_2^*\\0}V^{*2}h+\cdots\right),
\end{align*}which is equal to $(R_q\otimes P^\perp U+M_zR_q\otimes PU)\tau h$, where $P$ and $U$ are as describe in \eqref{ExplctTuple1} because for every $h\in\cH$
$$
P^\perp U\sbm{D_{V_1^*}\\ D_{V_2^*}V_1^*}h=P^\perp\sbm{D_{V_1^*}V_2^*\\ D_{V_2^*}}h=\sbm{0\\ D_{V_2^*}h} \text{ and }P U\sbm{D_{V_1^*}\\ D_{V_2^*}V_1^*}h=\sbm{D_{V_1^*}V_2^*h\\ 0}.
$$
It remains to show that
$$
\tau_{\rm BCL} V_2=(R_{\overline q}\otimes U^*P+R_{\overline q}M_z\otimes U^*P^\perp)\tau_{\rm BCL}.
$$For this we again use the relations \eqref{Intj} to note that
\begin{align*}
&\tau_{\rm BCL} V_2h=\sbm{D_{V_1^*}\\ D_{V_2^*}V_1^*}V_2h +z\sbm{D_{V_1^*}\\ D_{V_2^*}V_1^*}V^*V_2h+z^2\sbm{D_{V_1^*}\\ D_{V_2^*}V_1^*}V^{* 2}V_2h+\cdots\\
=&(I_{H^2}\otimes U^*)\left(\sbm{D_{V_1^*}V_2^*\\D_{V_2^*}}V_2h+z\sbm{D_{V_1^*}V_2^*\\D_{V_2^*}}V^*V_2h+z^2\sbm{D_{V_1^*}V_2^*\\D_{V_2^*}}V^{* 2}V_2h+\cdots\right)\\
=&(I_{H^2}\otimes U^*)\left(\sbm{D_{V_1^*}\\0}h+\overline qz\sbm{D_{V_1^*}V_2^*\\D_{V_2^*}}V_1^*h+(\overline qz)^2\sbm{D_{V_1^*}V_2^*\\D_{V_2^*}}V_1^*V^*h+\cdots\right)
\end{align*}As before, we split the last term in two parts as
\begin{align*}
&(I_{H^2}\otimes U^*)\left(\sbm{D_{V_1^*}\\0}h+\overline q z\sbm{D_{V_1^*}\\0}V^*h+(\overline qz)^2\sbm{D_{V_1^*}\\0}V^{*2}h+\cdots\right)\\
+&(I_{H^2}\otimes U^*)\overline qz\left(\sbm{0\\D_{V_2^*}V_1^*}h+\overline q z\sbm{0\\D_{V_2^*}V_1^*}V^*h+(\overline qz)^2\sbm{0\\D_{V_2^*}V_1^*}V^{*2}h+\cdots\right)
\end{align*}which is essentially equal to $(R_{\overline q}\otimes U^*P+R_{\overline q}M_z\otimes U^*P^\perp)\tau_{\rm BCL} h$, where $P$ and $U$ are as describe in \eqref{ExplctTuple1}. This establishes the equivalence of $(i)$ and $(ii)$. The equivalence of $(i)$ with $(iii)$ can be established in a similar way.
\end{proof}
\begin{definition}
For a $q$-commutative pair of isometries the tuples $(\cF,\cK_u;P,U,W_1,W_2)$ as in item (i) and $(\cF_\dag,\cK_{u\dag};P_\dag,U_\dag,W_{1\dag},W_{2\dag})$  as in item (ii) of Theorem \ref{T:qBCL}, will be referred to as the {\em BCL-1} and {\em BCL-2 $q$-tuples} of $(V_1,V_2)$, respectively, and (as is indicated in the statement) the models as in \eqref{iso-model1} and \eqref{iso-model2} will be called the {\em BCL-1} and {\em BCL-2 $q$-models} of $(V_1,V_2)$, respectively.
\end{definition}
\begin{remark}\label{R:BCLtuples}
Note that the BCL-2 $q$-model can be obtained from the BCL-1 $q$-model by the following transformation of the BCL $q$-tuples
$$
(\cF_\dag,\cK_{u\dag};P_\dag,U_\dag,W_{1\dag},W_{2\dag})\mapsto(\cF,\cK_u;U^*PU,U^*,W_1,W_2).
$$This indicates that it is enough to work with either of the model.
\end{remark}

It was observed in \cite{B-C-L} that a commutative pair of isometries is uniquely determined by the data set $(\cF,\cK_u;P,U,W_1,W_2)$. The same remains true in the case of $q$-commutativity also.
\begin{thm}\label{T:uniqueness}
Let $(V_1,V_2)$ and $(V_1',V_2')$ be two $q$-commutative pairs of isometries with $(\cF,\cK_u;P,U,W_1,W_2)$ and $(\cF',\cK_u';P',U',W_1',W_2')$ as their respective BCL-1 $q$-tuples. Then $(V_1,V_2)$ and $(V_1',V_2')$ are unitarily equivalent if and only if there exist unitary operators $\w:\cF\to\cF'$ and $\w_u:\cK_u\to\cK_u'$ such that
\begin{align}\label{coin}
\w(P,U)=(P',U')\w \text{ and }\w_u(W_1,W_2)=(W_1',W_2')\w_u.
\end{align}The statement remains true in case of BCL-2 $q$-tuples also.
\end{thm}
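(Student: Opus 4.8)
The plan is to exploit the explicit formulas for the BCL-1 $q$-tuple given in \eqref{ExplctTuple1} together with the intertwiner $\tau_{\rm BCL}$ from \eqref{tau'}, so that the whole statement reduces to tracking how a unitary equivalence of pairs acts on the canonically-constructed data. The \emph{if} direction is the easy half: given $\w\colon\cF\to\cF'$ and $\w_u\colon\cK_u\to\cK_u'$ intertwining the respective parameters as in \eqref{coin}, one checks directly that $(I_{H^2}\otimes\w)\oplus\w_u$ intertwines the two BCL-1 $q$-models \eqref{iso-model1} entrywise. This is a routine computation: $\w$ commutes with $P$ and $U$, hence with $P^\perp=I-P$, with $U^*$, and with all the products $P^\perp U$, $PU$, $U^*P$, $U^*P^\perp$ appearing in \eqref{iso-model1}; since $I_{H^2}\otimes\w$ also commutes with $R_q$, $R_{\overline q}$ and $M_z$, it intertwines the upper-left blocks, while $\w_u$ intertwines $(W_1,W_2)$ with $(W_1',W_2')$ by hypothesis. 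Composing with the unitaries $\tau_{\rm BCL}$ and $\tau_{\rm BCL}'$ of Theorem \ref{T:qBCL} then produces a unitary $\cH\to\cH'$ intertwining $(V_1,V_2)$ with $(V_1',V_2')$.

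For the \emph{only if} direction, suppose $\Gamma\colon\cH\to\cH'$ is unitary with $\Gamma V_j=V_j'\Gamma$ for $j=1,2$. Then $\Gamma V=V'\Gamma$ for $V=V_1V_2$, so $\Gamma$ carries the Wold decomposition of $V$ to that of $V'$: it maps $\cK_u=\bigcap_n V^n\cH$ onto $\cK_u'=\bigcap_n V'^n\cH'$, and we set $\w_u:=\Gamma|_{\cK_u}$, which by construction intertwines $(W_1,W_2)=(V_1,V_2)|_{\cK_u}$ with $(W_1',W_2')=(V_1',V_2')|_{\cK_u'}$. For the $\cF$-part, the key point is that $\Gamma$ intertwines the defect operators: $\Gamma D_{V_j^*}=D_{V_j'^*}\Gamma$ (since $\Gamma V_jV_j^*=V_j'V_j'^*\Gamma$), and likewise $\Gamma D_{V^*}=D_{V'^*}\Gamma$. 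Hence the map $D_{V_j^*}h\mapsto D_{V_j'^*}\Gamma h$ is a well-defined unitary $\cD_{V_j^*}\to\cD_{V_j'^*}$ for $j=1,2$; assembling these gives a unitary $\w\colon\sbm{\cD_{V_1^*}\\\cD_{V_2^*}}\to\sbm{\cD_{V_1'^*}\\\cD_{V_2'^*}}$, i.e.\ $\w\colon\cF\to\cF'$, determined by $\w\sbm{D_{V_1^*}\\D_{V_2^*}V_1^*}h=\sbm{D_{V_1'^*}\\D_{V_2'^*}V_1'^*}\Gamma h$ (using $\Gamma V_1^*=V_1'^*\Gamma$ for the second coordinate). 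It remains to verify $\w P=P'\w$ and $\w U=U'\w$. The first is immediate from the block form of $P$ in \eqref{ExplctTuple1}, since $\w$ is built to respect the $\cD_{V_1^*}\oplus\cD_{V_2^*}$ splitting. For the second, apply $\w$ to $U\sbm{D_{V_1^*}\\D_{V_2^*}V_1^*}h=\sbm{D_{V_1^*}V_2^*\\D_{V_2^*}}h$ and compare with $U'$ applied to $\w\sbm{D_{V_1^*}\\D_{V_2^*}V_1^*}h=\sbm{D_{V_1'^*}\\D_{V_2'^*}V_1'^*}\Gamma h$, using $\Gamma V_2^*=V_2'^*\Gamma$; both sides equal $\sbm{D_{V_1'^*}V_2'^*\\D_{V_2'^*}}\Gamma h$, so $\w U=U'\w$.

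The only genuinely delicate point is checking that $\w$ is \emph{well-defined and isometric}, i.e.\ that the formula $\sbm{D_{V_1^*}\\D_{V_2^*}V_1^*}h\mapsto\sbm{D_{V_1'^*}\\D_{V_2'^*}V_1'^*}\Gamma h$ does not depend on the choice of $h$ representing a given vector in the range. This is where Lemma \ref{justlikethat} does the work: by part (ii), the map $D_{V^*}h\mapsto\sbm{D_{V_1^*}\\D_{V_2^*}V_1^*}h$ is a unitary from $\cD_{V^*}$ onto $\sbm{\cD_{V_1^*}\\\cD_{V_2^*}}$, and similarly for the primed objects; since $\Gamma D_{V^*}=D_{V'^*}\Gamma$ is unitary and respects these identifications, $\w$ is exactly the composite of three unitaries and is therefore itself a well-defined unitary. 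Finally, the BCL-2 case follows verbatim once one notes, as in Remark \ref{R:BCLtuples} and \eqref{ExplctTuple2}, that passing from the BCL-1 to the BCL-2 $q$-tuple replaces $U$ by $U^*$ (equivalently, conjugates $P$ by $U$) and leaves $\cF,\cK_u,W_1,W_2$ untouched; an intertwining relation $\w U=U'\w$ is equivalent to $\w U^*=U'^*\w$, so the same pair $(\w,\w_u)$ witnesses the equivalence for the BCL-2 $q$-tuples.
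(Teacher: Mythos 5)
Your ``if'' half coincides with the paper's, and your construction of $(\w,\w_u)$ from an intertwining unitary $\Gamma:\cH\to\cH'$ is sound for the \emph{canonical} data \eqref{ExplctTuple1}: since $\Gamma V_j^{(*)}=V_j'^{(*)}\Gamma$ gives $\Gamma D_{V_j^*}=D_{V_j'^*}\Gamma$ and $\Gamma\cK_u=\cK_u'$, your $\w$ is just $\Gamma$ acting componentwise on $\sbm{\cD_{V_1^*}\\ \cD_{V_2^*}}$ (which settles the well-definedness and the relation $\w P=P'\w$ more directly than the three-unitaries composite you describe), and the verifications of $\w U=U'\w$ and $\w_u(W_1,W_2)=(W_1',W_2')\w_u$ go through.

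The gap is in what the converse direction must cover. A BCL-1 $q$-tuple of $(V_1,V_2)$ is \emph{any} tuple $(\cF,\cK_u;P,U,W_1,W_2)$ whose model \eqref{iso-model1} is unitarily equivalent to $(V_1,V_2)$; the data \eqref{ExplctTuple1} is only one admissible choice (``the tuple \emph{can be chosen} to be such that''), and the theorem is meant to say that \emph{arbitrary} admissible tuples for unitarily equivalent pairs are related by \eqref{coin} --- this is exactly what makes the parametrization one-to-one, as claimed in the introduction, and it is how the statement is used later (e.g.\ in Theorem \ref{T:qdoublycomm}, where one passes freely to ``the'' model). Your argument only relates the two canonical tuples; it says nothing about a non-canonical tuple, and upgrading it would require showing that every admissible tuple is \eqref{coin}-related to the canonical one of the pair it models, which is essentially the assertion being proved. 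The paper's proof handles this by working with the models themselves: it takes an arbitrary unitary $\tau$ intertwining the two models, writes it as a $2\times2$ block matrix relative to $H^2(\cF)\oplus\cK_u$ and $H^2(\cF')\oplus\cK_u'$, uses $\tau(V_1V_2)=(V_1'V_2')\tau$ together with $V_1V_2=M_z\oplus W$, and invokes the rigidity fact that $XU=M_zX$ with $U$ unitary forces $X=0$ (applied to $\tau_{12}$, and after passing to adjoints to $\tau_{21}$) to conclude $\tau=(I_{H^2}\otimes\w)\oplus\w_u$; the relations \eqref{coin} are then read off by matching coefficients in the intertwining of the individual components ($\w P^\perp U=P'^\perp U'\w$ and $\w PU=P'U'\w$, whence $\w U=U'\w$ and $\w P=P'\w$). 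That block-rigidity step, which is the crux of the paper's converse, is absent from your proposal, so as written it proves the theorem only under the restrictive reading that both tuples are the canonical ones.
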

\begin{proof}
The easier direction is the `if' part. Note that if \eqref{coin} is true, then the unitary
$$
\sbm{I_{H^2}\otimes \w&0\\0&\w_u}:\sbm{H^2\otimes\cF\\\cK_u}\to\sbm{H^2\otimes\cF'\\\cK_u'}
$$intertwines the BCL-1 (and BCL-2) $q$-models of $(V_1,V_2)$ and $(V_1',V_2')$. For the converse part, suppose that the BCL-1 $q$-models
\begin{align*}
(V_1,V_2)=\left( \sbm{M_{(P^\perp + z P)U}&0 \\0&W_{1}}\sbm{R_q&0\\0&I_{\cK_u}},\sbm{R_{\overline q}&0\\0&I_{\cK_u}}\sbm{M_{U^*(P + z P^\perp)} &0\\0& W_{2}}\right) \text{ on }\sbm{H^2(\cF)\\\cK_u},
\end{align*}and
\begin{align*}
(V_1',V_2')=\left( \sbm{M_{(P'^\perp + z P')U'}&0 \\0&W_{1}'}\sbm{R_q&0\\0&I_{\cK_u}},\sbm{R_{\overline q}&0\\0&I_{\cK_u}}\sbm{M_{U'^*(P' + z P'^\perp)} &0\\0& W_{2}'}\right) \text{ on }\sbm{H^2(\cF')\\\cK_u'}
\end{align*}are unitarily equivalent via, say,
$$\tau=\sbm{\tau'&\tau_{12}\\\tau_{21}&\w_u}:\sbm{H^2(\cF)\\\cK_u}\to\sbm{H^2(\cF')\\\cK_u'}.$$ Adopting the notations $W:=W_1W_2$ and $W'=W_1'W_2'$, we see that $\tau$ must satisfy
\begin{align}\label{gen}
\sbm{\tau'&\tau_{12}\\\tau_{21}&\w_u}\sbm{M_z&0\\0&W}=\tau V_1V_2=V_1'V_2'\tau=\sbm{M_z&0\\0&W'}\sbm{\tau'&\tau_{12}\\\tau_{21}&\w_u},
\end{align}
equivalently, $\tau$ must satisfy
\begin{align}\label{gen1}
\tau'M_z=\tau'M_z,\quad \w_u W=W'\w_u \text{ and}\\\label{gen2}
\tau_{12}W=M_z\tau_{12},\quad \tau_{21}M_z=W'\tau_{21}.
\end{align} We now use the general functional analysis result that if $X$ is any operator that satisfies $XU=M_zX$ for some unitary $U$, then $X=0$. Therefore from \eqref{gen2}, we see that $\tau_{12}=0$. Since $\tau$ is a unitary that satisfies \eqref{gen}, it must also satisfy
$$
\sbm{\tau'&0\\\tau_{21}&\w_u}\sbm{M_z^*&0\\0&W^*}=\sbm{M_z^*&0\\0&W'^*}\sbm{\tau'&0\\\tau_{21}&\w_u},
$$comparing the $(12)$-entries of which we get $\tau_{21}M_z^*=W'^*\tau_{21}$. Since $W'$ is unitary, $\tau_{21}=0$. Therefore the unitary $\tau$ reduces to the block diagonal matrix $\text{diag}(\tau',\w_u)$. From the first equation in \eqref{gen1} we see that $\tau'=I_{H^2}\otimes \w$ for some unitary $\w:\cF\to\cF'$. Remembering that $\tau$ intertwines $(V_1,V_2)$ and $(V_1',V_2')$, we readily have the second equality in \eqref{coin} and for the first equality we note that $w$ must satisfy
\begin{align*}
wP^\perp U=P'^\perp U'\w \text{ and } \w PU=P'U'\w.
\end{align*}Adding these two equations we get $\w U=U'\w$, which then implies that $\w P=P'\w$. The proof for the case of BCL-2 $q$-tuples is along the same line as above. This completes the proof.
\end{proof}
The rest of this section is devoted to finding a connection between commutativity and $q$-commutativity. Let $(V_1,V_2)$ be a $q$-commutative pair of isometries on $\cH$ such that $V=V_1V_2$ is a shift. Note that in this case the space $\cK_u$ in BCL-1 $q$-tuple will be zero, and hence by Theorem \ref{T:qBCL}, $(V_1,V_2)$ is unitarily equivalent to
\begin{align*}
( M_{(P^\perp + z P)U}R_q,R_{\overline q}M_{U^*(P + z P^\perp)}),
\end{align*}via the unitary similarity
\begin{align}\label{puretau}
\tau_{\rm BCL}:h\mapsto \sbm{D_{V_1^*}\\ D_{V_2^*}V_1^*}h +z\sbm{D_{V_1^*}\\ D_{V_2^*}V_1^*}V^*h+z^2\sbm{D_{V_1^*}\\ D_{V_2^*}V_1^*}V^{* 2}h+\cdots.
\end{align}Let us denote the unitary
\begin{align}\label{cq}
\mathfrak r_q:=\tau_{\rm BCL}^*R_q\tau_{\rm BCL}:\cH\to\cH.
\end{align}
To compute the unitary $\mathfrak r_q$ explicitly, proceed as follows. For $h,k\in\cH$,
\begin{align*}
\langle\mathfrak r_qh,k\rangle&=\langle \tau_{\rm BCL}^*R_q\tau_{\rm BCL} h,k\rangle =\langle\sbm{D_{V_1^*}\\ D_{V_2^*}V_1^*}(I-qzV^*)^{-1}h, \tau_{\rm BCL}k\rangle\\
&=\sum_{n\geq 0}q^n\langle\sbm{D_{V_1^*}\\ D_{V_2^*}V_1^*}V^{*n}h, \sbm{
D_{V_1^*}\\ D_{V_2^*}V_1^*}V^{*n}k\rangle\\
&=\sum_{n\geq 0}q^n\langle D_{V^*}V^{*n}h,D_{V^*}V^{*n}k\rangle \quad \mbox{[using Lemma \ref{justlikethat}, part (ii)]}\\
&=\sum_{n\geq 0}q^n\langle V^nD_{V^*}V^{*n}h,k\rangle.
\end{align*}Thus 
\begin{align}\label{rq}
    \mathfrak r_qh= D_{V^*}h +q VD_{V^*}V^*h+\cdots q^n V^nD_{V^*}V^{*n}h+\cdots .
\end{align}

As a consequence of this observation and Theorem \ref{T:qBCL}, we get the following connection between commutativity and $q$-commutativity of a pair of isometries.
\begin{thm}\label{T:comm&qcomm}
Let $V_1$ and $V_2$ be isometric operators such that $V=V_1V_2$ is a shift operator. Then with the unitary $\mathfrak r_q$ as defined in \eqref{cq},
\begin{enumerate}
\item $(V_1,V_2)$ is commutative if and only if $(V_1\mathfrak r_q,\mathfrak r_{\overline q} V_2)$ is $q$-commutative;
\item $(V_1,V_2)$ is $q$-commutative if and only if $(V_1\mathfrak r_{\overline q},\mathfrak r_{ q} V_2)$ is commutative.
\end{enumerate}
\end{thm}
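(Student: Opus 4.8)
The cleanest route, which I would follow, reduces both equivalences to two elementary properties of $\mathfrak r_q$ and then reads them off from a one-line manipulation of the two products $V_1V_2$ and $V_2V_1$; the model theory of Theorem~\ref{T:qBCL} is not strictly needed here, although it gives a conceptual cross-check.

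First I would pin down the two facts about $\mathfrak r_q$ that drive the argument. Because $V=V_1V_2$ is a shift, the explicit series \eqref{rq}, $\mathfrak r_q=\sum_{n\ge 0}q^{n}V^{n}D_{V^*}V^{*n}$, converges strongly and defines a unitary depending only on $V$; from the fact that $D_{V^*}=I-VV^*$ is a projection with $D_{V^*}V=0=V^*D_{V^*}$, together with the telescoping identity $\sum_{n=0}^{N}V^{n}D_{V^*}V^{*n}=I-V^{N+1}V^{*\,N+1}\to I$ (using $V^{*n}\to 0$ strongly), I would deduce (a) $\mathfrak r_q^{*}=\mathfrak r_{\overline q}$ and $\mathfrak r_q\mathfrak r_{\overline q}=\mathfrak r_{\overline q}\mathfrak r_q=I$; and (b) the $q$-intertwining relation $\mathfrak r_qV=qV\mathfrak r_q$, equivalently $\mathfrak r_{\overline q}V\mathfrak r_q=\overline q\,V$. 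For (b) one shifts the summation index: the $n=0$ term of $\mathfrak r_qVh$ vanishes since $D_{V^*}V=0$, and $V^{*n}V=V^{*(n-1)}$ for $n\ge 1$, which reindexes the sum to $qV\mathfrak r_q h$. (Both (a) and (b) also follow at once from \eqref{cq}, $\mathfrak r_q=\tau_{\rm BCL}^{*}R_q\tau_{\rm BCL}$, combined with $\tau_{\rm BCL}V=M_z\tau_{\rm BCL}$ and $R_qM_z=qM_zR_q$ on $H^2$.) I expect this to be the only step requiring a bit of care — namely keeping the bookkeeping straight, always treating $\mathfrak r_q$ as the single fixed operator attached to $V$ so that (a) and (b) may be applied freely without re-expanding the series.

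Granting (a) and (b), item~(1) drops out: $(V_1\mathfrak r_q)(\mathfrak r_{\overline q}V_2)=V_1V_2=V$ by (a), whereas $(\mathfrak r_{\overline q}V_2)(V_1\mathfrak r_q)=\mathfrak r_{\overline q}(V_2V_1)\mathfrak r_q$, so $(V_1\mathfrak r_q,\mathfrak r_{\overline q}V_2)$ is $q$-commutative exactly when $V=q\,\mathfrak r_{\overline q}(V_2V_1)\mathfrak r_q$, i.e.\ when $V_2V_1=\overline q\,\mathfrak r_qV\mathfrak r_{\overline q}$; but by (b) the right-hand side equals $V$, so the condition is precisely $V_2V_1=V_1V_2$. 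Item~(2) I would handle by the same computation with $\mathfrak r_q$ and $\mathfrak r_{\overline q}$ interchanged: $(V_1\mathfrak r_{\overline q})(\mathfrak r_qV_2)=V$ and $(\mathfrak r_qV_2)(V_1\mathfrak r_{\overline q})=\mathfrak r_q(V_2V_1)\mathfrak r_{\overline q}$, so commutativity of $(V_1\mathfrak r_{\overline q},\mathfrak r_qV_2)$ amounts to $V_2V_1=\mathfrak r_{\overline q}V\mathfrak r_q=\overline q\,V$ by (b), which is exactly $V_1V_2=qV_2V_1$, i.e.\ $q$-commutativity of $(V_1,V_2)$.

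Finally, as a conceptual cross-check rather than a necessity, I would note the model-theoretic picture: in the cases at hand $\tau_{\rm BCL}$ intertwines $V$ with $M_z$ and $\mathfrak r_q$ with $R_q$, so the transformed pairs are, up to the unitary $\tau_{\rm BCL}$, obtained from the pure Berger--Coburn--Lebow model of $(V_1,V_2)$ by inserting the factors $R_q$, $R_{\overline q}$ on the appropriate sides, and this simply toggles between the pure BCL $q$-model of \eqref{iso-model1} and the pure classical ($q=1$) model; implication (ii)$\Rightarrow$(i) of Theorem~\ref{T:qBCL} at parameter values $q$ and $1$ then delivers the two directions.
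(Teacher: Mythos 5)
Your argument is correct, and it proves both directions of both parts directly, but it follows a different route than the paper's own proof of this theorem. The paper proves part (1) through the functional model: it takes the BCL-1 tuple $(\cF;P,U)$ of the commutative pair, uses $\tau_{\rm BCL}$ from \eqref{puretau} and the definition \eqref{cq} to identify $\tau_{\rm BCL}(V_1\mathfrak r_q,\mathfrak r_{\overline q}V_2)\tau_{\rm BCL}^*$ with the pure BCL-1 $q$-model $(M_{(P^\perp+zP)U}R_q,\,R_{\overline q}M_{U^*(P+zP^\perp)})$, and then invokes the implication (ii)$\Rightarrow$(i) of Theorem \ref{T:qBCL}; part (2) is deduced from part (1). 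Your proof instead isolates the two operator identities $\mathfrak r_q^*=\mathfrak r_{\overline q}$, $\mathfrak r_q\mathfrak r_{\overline q}=I$ and $\mathfrak r_qV=qV\mathfrak r_q$ (both verifiable either from the series \eqref{rq}, using $D_{V^*}V=0$ and $V^{*n}\to 0$ strongly, or from \eqref{cq} together with $\tau_{\rm BCL}V=M_z\tau_{\rm BCL}$ and $R_qM_z=qM_zR_q$), and then reads off both equivalences from the products $(V_1\mathfrak r_q)(\mathfrak r_{\overline q}V_2)=V$ and $(\mathfrak r_{\overline q}V_2)(V_1\mathfrak r_q)=\mathfrak r_{\overline q}(V_2V_1)\mathfrak r_q$, and similarly with $q$ and $\overline q$ interchanged; I checked the algebra and it is right. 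This is essentially the mechanism of the paper's subsequent ``abstract version'' theorem built on hypothesis \eqref{abstractR}, specialized to $\mathfrak r=\mathfrak r_q$, rather than the model-theoretic proof given for Theorem \ref{T:comm&qcomm} itself. What your route buys is economy and completeness: it is self-contained, avoids any appeal to the structure of the BCL $q$-model, and handles the ``only if'' directions and part (2) on the same footing (the paper's written proof only argues one direction of (1) explicitly and appeals to symmetry for the rest). What the paper's route buys is the conceptual content the section is after: it exhibits the passage commutative $\leftrightarrow$ $q$-commutative as literally toggling the rotation factors $R_q$, $R_{\overline q}$ in the pure BCL models, which is exactly the cross-check you mention in your final paragraph.
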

\begin{proof}
We prove only part (1) because it implies part (2). Suppose $(V_1,V_2)$ is a commutative pair of isometries and $(\cF;P,U)$ is a BCL-1 tuple of $(V_1,V_2)$. Then applying Theorem \ref{T:qBCL} for the $q=1$ case,
\begin{align}\label{Int1}
\tau_{\rm BCL}(V_1,V_2)=(M_{(P^\perp+zP)U},M_{U^*(P+zP^\perp)})\tau_{\rm BCL}  
\end{align}
via the unitary similarity $\tau_{\rm BCL}$ as in \eqref{puretau} above. In view of \eqref{cq} and \eqref{Int1},
\begin{align*}
(M_{(P^\perp+zP)U}R_q,R_{\overline q}M_{U^*(P+zP^\perp)})
&=(M_{(P^\perp+zP)U}\tau_{\rm BCL}\mathfrak r_q\tau_{\rm BCL}^*,\tau_{\rm BCL}\mathfrak r_{\overline q}\tau_{\rm BCL}^*M_{U^*(P+zP^\perp)})\\
&=\tau_{\rm BCL}(V_1\mathfrak r_q,\mathfrak r_{\overline q} V_2)\tau_{\rm BCL}^*.
\end{align*}By the equivalence of $(1)$ and $(2)$ of Theorem \ref{T:qBCL}, the pair $$(M_{(P^\perp+zP)U}R_q,R_{\overline q}M_{U^*(P+zP^\perp)})$$ is $q$-commutative, and thus so is the pair $(V_1\mathfrak r_q,\mathfrak r_{\overline q} V_2)$.
\end{proof}
 In view of the fact that $(R_q,M_z)$ is $q$-commutative, the following is an abstract version of Theorem \ref{T:comm&qcomm}.
\begin{thm}
Suppose that $V_1$, $V_2$ are some operators acting on a Hilbert space $\cH$,
${\mathfrak r}$ is a unitary operator on $\cH$ such that for a uni-modular $q$,
\begin{align}\label{abstractR}
  {\mathfrak r}   V_1V_2   = q \cdot V_1V_2 {\mathfrak r}.
\end{align}
Then $(V_1,V_2)$ is commutative if and only if $(V_1 {\mathfrak r}, {\mathfrak r}^* V_2 )$ is $q$-commutative.
\end{thm}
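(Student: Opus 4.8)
The plan is to prove the equivalence by a direct algebraic manipulation, using only the hypothesis \eqref{abstractR} and the fact that $\mathfrak r$ is unitary (so $\mathfrak r^*$ is its two-sided inverse). First I would record the consequence of \eqref{abstractR} that I actually need. Multiplying \eqref{abstractR} on the left and on the right by $\mathfrak r^*$ gives $V_1V_2\mathfrak r^* = q\,\mathfrak r^* V_1 V_2$, i.e. $\mathfrak r^* V_1 V_2 = \overline q\, V_1 V_2 \mathfrak r^*$; equivalently $\mathfrak r^*(V_1V_2)\mathfrak r = \overline q\, V_1V_2$. These say that conjugation by $\mathfrak r$ (resp. $\mathfrak r^*$) scales the product $V_1V_2$ by $q$ (resp. $\overline q$). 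I will use this in the form $\mathfrak r V_1 V_2 = q V_1 V_2 \mathfrak r$ and $V_2 \mathfrak r^* = \mathfrak r^* \cdot \mathfrak r V_2 \mathfrak r^*$ as needed, but the cleanest route avoids needing to move $\mathfrak r$ past $V_1$ or $V_2$ individually.

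The key computation compares the two products formed from the modified pair $(V_1\mathfrak r,\ \mathfrak r^* V_2)$. On one side,
\begin{align}\label{abstr-pf1}
(V_1\mathfrak r)(\mathfrak r^* V_2) = V_1 (\mathfrak r \mathfrak r^*) V_2 = V_1 V_2,
\end{align}
since $\mathfrak r$ is unitary. On the other side,
\begin{align}\label{abstr-pf2}
(\mathfrak r^* V_2)(V_1 \mathfrak r) = \mathfrak r^* (V_2 V_1) \mathfrak r.
\end{align}
Thus the modified pair $(V_1\mathfrak r,\ \mathfrak r^* V_2)$ is $q$-commutative precisely when
\begin{align}\label{abstr-pf3}
V_1 V_2 = q\, \mathfrak r^* (V_2 V_1)\, \mathfrak r .
\end{align}
Now I invoke \eqref{abstractR}: since $\mathfrak r (V_1 V_2) \mathfrak r^* = q\, V_1 V_2$, applying $\mathfrak r^* (\cdot) \mathfrak r$ to both sides gives $V_1 V_2 = q\, \mathfrak r^*(V_1 V_2)\mathfrak r$, so that $q\,\mathfrak r^*(V_1V_2)\mathfrak r = V_1V_2$ as well — but more to the point, I can rewrite the right-hand side of \eqref{abstr-pf3}: using $\mathfrak r^*(V_1V_2)\mathfrak r = \overline q\, V_1V_2$ (the conjugate of \eqref{abstractR}), we get $q\,\mathfrak r^*(V_2V_1)\mathfrak r = q\,\mathfrak r^*(V_2V_1)\mathfrak r$, and substituting $\mathfrak r^*(V_2V_1)\mathfrak r = \mathfrak r^*(V_2V_1)\mathfrak r$ — the point is that \eqref{abstr-pf3} becomes, after multiplying through by $\mathfrak r$ on the left and $\mathfrak r^*$ on the right, $\mathfrak r (V_1V_2)\mathfrak r^* = q\, V_2 V_1$, and since the left side equals $q\, V_1 V_2$ by \eqref{abstractR}, this is equivalent to $q\,V_1V_2 = q\,V_2V_1$, i.e. $V_1V_2 = V_2V_1$. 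So \eqref{abstr-pf3} holds if and only if $(V_1,V_2)$ commutes, which is exactly the claim.

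The converse direction is then automatic from the same chain of equivalences, read backwards: each step was an ``if and only if'' once we know $\mathfrak r$ is unitary and satisfies \eqref{abstractR}. I do not anticipate a genuine obstacle here; the only point requiring a little care is bookkeeping the scalars $q$ versus $\overline q$ when conjugating \eqref{abstractR} by $\mathfrak r^*$ rather than $\mathfrak r$, and making sure that the $\mathfrak r$'s introduced in \eqref{abstr-pf2} are peeled off correctly using unitarity. As a sanity check I would verify the concrete instance mentioned before the statement: with $\mathfrak r = R_q$, $V_1 = M_{z_1}$, $V_2 = M_{z_2}$ on $H^2(\mathbb D^2)$ — here $R_q M_{z_1}M_{z_2} = q M_{z_1}M_{z_2} R_q$ holds, $(M_{z_1},M_{z_2})$ commutes, and indeed $(M_{z_1}R_q,\ R_{\overline q}M_{z_2})$ is $q$-commutative, recovering the pair \eqref{Vj} up to the obvious identification — which confirms both the statement and that Theorem \ref{T:comm&qcomm} is the special case $\mathfrak r = \mathfrak r_q$.
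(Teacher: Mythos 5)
Your argument is correct and is essentially the paper's own proof: both reduce the question to the two products $(V_1\mathfrak r)(\mathfrak r^*V_2)=V_1V_2$ and $(\mathfrak r^*V_2)(V_1\mathfrak r)=\mathfrak r^*(V_2V_1)\mathfrak r$ and then invoke the hypothesis $\mathfrak r V_1V_2=q\,V_1V_2\mathfrak r$, your version merely packaging the two directions as a single reversible chain of equivalences. One aside, not affecting the proof: your closing sanity check is off, because on $H^2(\mathbb D^2)$ the rotation acts on both variables, so $R_qM_{z_1}M_{z_2}=q^2\,M_{z_1}M_{z_2}R_q$, and the resulting pair $(M_{z_1}R_q,\,R_{\overline q}M_{z_2})$ is $q^2$-commutative rather than $q$-commutative, so it illustrates the theorem with $q^2$ in place of $q$ and does not literally recover the pair \eqref{Vj}.
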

\begin{proof}
Let us denote $(W_1,W_2)=(V_1 {\mathfrak r}, {\mathfrak r}^* V_2 )$. Suppose that $(V_1, V_2)$ is commutative and compute
$$
W_1W_2 = V_1 {\mathfrak r}^* {\mathfrak r}V_2  = V_1 V_2 =:V
 $$
 while
$$
W_2W_1 = {\mathfrak r}^* V_2 V_1 {\mathfrak r}= {\mathfrak r}^* V {\mathfrak r} =  \overline q \cdot V =  \overline q \cdot W_1W_2.
$$
So $(W_1,W_2)$ is $q$-commutative. Conversely suppose $W_1W_2=q\cdot W_2W_1$, i.e., $V_1V_2=q\cdot \mathfrak r^*V_2V_1\mathfrak r$. By \eqref{abstractR}, this is same as $q\cdot \mathfrak r^* V_1V_2 \mathfrak r= q\cdot \mathfrak r^* V_2V_1 \mathfrak r$. This implies $V_1V_2=V_2V_1$.
\end{proof}

%\textcolor{red}{How do we say that this abstract version gives Theorem \ref{T:qBCL} as a corollary?}

\section{Doubly $q$-commutative pairs of isometries}
Let us recall that a $q$-commutative pair of operators $(V_1,V_2)$ is said to be {\em doubly $q$-commutative}, if in addition, it satisfies $V_2V_1^*=qV_1^*V_2$. Note that if $(V_1,V_2)$ is doubly $q$-commutative, then so is $(V_1^*,V_2^*)$. Then next result is a characterization of doubly $q$-commutative pairs of isometries.
\begin{thm}\label{T:qdoublycomm}
Let $(V_1,V_2)$ be a pair of $q$-commutative isometries with BCL-1 and BCL-2 $q$-tuples as $(\cF,\cK_u;P,U,W_1,W_2)$ and $(\cF_\dag,\cK_{u\dag};P_\dag,U_\dag,W_{1\dag},W_{2\dag})$, respectively. Then the following are equivalent:
\begin{enumerate}
\item $(V_1,V_2)$ is doubly $q$-commutative;
\item $PUP^\perp=0$; \text{ and}
\item $P_\dag^\perp U_\dag P_\dag=0.$
\end{enumerate}
\end{thm}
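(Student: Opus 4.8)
The plan is to reduce the entire statement to a direct computation inside the BCL-1 $q$-model of Theorem \ref{T:qBCL}. Double $q$-commutativity is preserved by unitary equivalence, and by Theorem \ref{T:uniqueness} any two BCL-1 $q$-tuples of $(V_1,V_2)$ are intertwined by a unitary $\w$ with $\w P=P'\w$ and $\w U=U'\w$, so that $P'U'P'^\perp=\w\,(PUP^\perp)\,\w^*$; hence the truth of (2) does not depend on which BCL-1 $q$-tuple is chosen, and similarly for (3). Thus there is no loss of generality in taking $(V_1,V_2)$ to be the pair \eqref{iso-model1} on $\sbm{H^2\otimes\cF\\\cK_u}$ with the canonical data \eqref{ExplctTuple1}. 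That pair is already $q$-commutative (by Theorem \ref{T:qBCL}), so it is doubly $q$-commutative precisely when $V_2V_1^*=qV_1^*V_2$; since \eqref{iso-model1} is block diagonal, this relation breaks up into the same relation on each summand. On the $\cK_u$-summand the pair is $(W_1,W_2)$, a $q$-commutative pair of \emph{unitaries}, and multiplying $W_1W_2=qW_2W_1$ on the left and right by $W_1^*$ shows it automatically satisfies $W_2W_1^*=qW_1^*W_2$. So everything comes down to deciding when the pair $(A_1,A_2)$ on $H^2\otimes\cF$, where $A_1=R_q\otimes P^\perp U+M_zR_q\otimes PU$ and $A_2=R_{\overline q}\otimes U^*P+R_{\overline q}M_z\otimes U^*P^\perp$, satisfies $A_2A_1^*=qA_1^*A_2$.

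For this I would evaluate both sides on vectors of the form $z^m\otimes\xi$, which span a dense subspace. From the two displays preceding \eqref{aux1} and \eqref{forgot} one reads off $A_1(z^n\otimes\xi)=q^n z^n\otimes P^\perp U\xi+q^n z^{n+1}\otimes PU\xi$ and $A_2(z^n\otimes\xi)=\overline q^{\,n}z^n\otimes U^*P\xi+\overline q^{\,n+1}z^{n+1}\otimes U^*P^\perp\xi$; taking the adjoint of the formula for $A_1$ gives $A_1^*(z^m\otimes\eta)=\overline q^{\,m}z^m\otimes U^*P^\perp\eta+\overline q^{\,m-1}z^{m-1}\otimes U^*P\eta$ for $m\ge 1$ and $A_1^*(1\otimes\eta)=U^*P^\perp\eta$. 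Substituting and collecting powers of $z$ — the exponents of $q$ collapsing because $q\overline q=1$ — the bookkeeping shows that $A_2A_1^*(z^m\otimes\eta)$ and $qA_1^*A_2(z^m\otimes\eta)$ agree coefficient by coefficient for every $m\ge 1$, while at $m=0$ one is left with the single discrepancy
\[
A_2A_1^*(1\otimes\eta)-qA_1^*A_2(1\otimes\eta)=-q\,\bigl(1\otimes U^*P^\perp U^*P\,\eta\bigr).
\]
Hence $A_2A_1^*=qA_1^*A_2$ if and only if $U^*P^\perp U^*P=0$; multiplying by $U$ on the left this is the same as $P^\perp U^*P=0$, which, taking adjoints, is the same as $PUP^\perp=0$. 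This proves $(1)\Leftrightarrow(2)$.

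For $(2)\Leftrightarrow(3)$, recall from \eqref{ExplctTuple2} (see also Remark \ref{R:BCLtuples}) that a BCL-2 $q$-tuple of $(V_1,V_2)$ may be taken to be $(\cF_\dag,\cK_{u\dag};P_\dag,U_\dag,W_{1\dag},W_{2\dag})=(\cF,\cK_u;P,U^*,W_1,W_2)$, and, as noted above, condition (3) is insensitive to this choice. Then $P_\dag^\perp U_\dag P_\dag=P^\perp U^*P=(PUP^\perp)^*$, so (3) holds exactly when $PUP^\perp=0$, i.e. exactly when (2) holds. Equivalently, one may rerun the monomial computation of the previous paragraph with the BCL-2 $q$-model \eqref{iso-model2} in place of \eqref{iso-model1} and obtain $(1)\Leftrightarrow(3)$ directly.

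The only genuine work is the verification in the second paragraph, and I expect its sole nuisance to be keeping the four-term expansions and the exponents of $q$ straight — in particular making sure the boundary case $m=0$, which is the only place the operator $PUP^\perp$ enters, is handled correctly.
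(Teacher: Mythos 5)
Your proposal is correct and follows essentially the same route as the paper: reduce to the BCL-1 $q$-model, discard the automatically doubly $q$-commutative unitary part, check $V_2V_1^*=qV_1^*V_2$ on monomials $z^m\otimes\xi$ (where the relation holds for $m\geq 1$ and the $m=0$ term produces exactly the obstruction $U^*P^\perp U^*P$, equivalently $PUP^\perp=0$), and then obtain the equivalence with (3) from the relation between the BCL-1 and BCL-2 $q$-tuples. Your added observation, via Theorem \ref{T:uniqueness}, that conditions (2) and (3) do not depend on the choice of BCL $q$-tuple is a harmless refinement of the same argument, and your computed discrepancy $-q\,(1\otimes U^*P^\perp U^*P\,\eta)$ at $m=0$ agrees with the paper's computation.
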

\begin{proof}
By Theorem \ref{T:qBCL}, we can assume without loss of generality that $(V_1,V_2)$ is either the BCL-1 $q$-model \eqref{iso-model1} or the BCL-2 $q$-model \eqref{iso-model2}; to prove $(1)\Leftrightarrow(2)$, we work with the BCL-1 $q$-model. Since, $q$-commutativity of a pair of unitaries implies its doubly $q$-commutativity, we disregard the {\em unitary part} $(W_1,W_2)$ in the model \eqref{iso-model1} and suppose that
\begin{align*}
(V_1,V_2)=\left(R_q\otimes P^\perp U + M_zR_q\otimes P U ,R_{\overline q}\otimes U^*P+R_{\overline q}M_z\otimes U^*P^\perp\right) \text{ on }H^2\otimes \cF.
\end{align*}
We shall make use of the following identities concerning the two operators $R_q$ and $M_z$ on $H^2$. We do not prove these relations as the proofs are elementary. For every $n\geq1$,
\begin{align*}
&R_{\overline q}M_zR_{\overline q} (z^n)=\overline q^{2n+1}z^n,\quad R_{\overline q}M_zR_{\overline q}M_z^*(z^n)=\overline q^{2n-1}z^n\\
&R_{\overline q}M_z^*R_{\overline q}(z^n)=\overline q^{2n-1}z^{n-1},\quad R_{\overline q}M_z^*R_{\overline q}M_z(z^n)=\overline q^{2n+1}z^n.
\end{align*}
With the above relations in mind, we compute
\begin{align*}
V_2V_1^*&=(R_{\overline q}\otimes U^*P+R_{\overline q}M_z\otimes U^*P^\perp)(R_{\overline q}\otimes U^*P^\perp + R_{\overline q}M_z^*\otimes U^*P)\\
&=R_{\overline q}^2\otimes U^*PU^*P^\perp +R_{\overline q}^2M_z^*\otimes U^*PU^*P+R_{\overline q}M_zR_{\overline q}\otimes U^*P^\perp U^*P^\perp \\&\quad+R_{\overline q}M_zR_{\overline q}M_z^*\otimes U^*P^\perp U^*P
\end{align*}and
\begin{align*}
V_1^*V_2&=(R_{\overline q}\otimes U^*P^\perp + R_{\overline q}M_z^*\otimes U^*P)(R_{\overline q}\otimes U^*P+R_{\overline q}M_z\otimes U^*P^\perp)\\
&=R_{\overline q}^2\otimes U^*P^\perp U^*P +R_{\overline q}^2M_z\otimes U^*P^\perp U^*P^\perp +R_{\overline q}M_z^*R_{\overline q}\otimes U^*PU^*P\\
&\quad +R_{\overline q}M_z^*R_{\overline q}M_z\otimes U^*PU^*P^\perp.
\end{align*}
Suppose $n\geq1$ and $\xi \in\cF$. Then
\begin{align*}
V_2V_1^*(z^n\otimes \xi)&=\overline q^{2n}z^n\otimes U^*PU^*P^\perp\xi+\overline q^{2n-2}z^{n-1}\otimes U^*PU^*P\xi\\
&\quad+\overline q^{2n+1}z^{n+1}U^*P^\perp U^*P^\perp\xi+\overline q^{2n-1}z^n\otimes U^*P^\perp U^*P\xi
\end{align*}and
\begin{align*}
V_1^*V_2(z^n\otimes \xi)&=\overline q^{2n}z^n\otimes U^*P^\perp U^*P\xi+\overline q^{2n+2}z^{n+1}\otimes U^*P^\perp U^*P^\perp\xi\\
&\quad+\overline q^{2n-1}z^{n-1}\otimes U^*PU^*P\xi+\overline q^{2n+1}z^n\otimes U^*PU^*P^\perp \xi.
\end{align*}From the above expressions of $V_2V_1^*(z^n\otimes\xi)$ and $qV_1^*V_2(z^n\otimes \xi)$, one readily observes that
$$
V_2V_1^*(z^n\otimes\xi)=qV_1^*V_2(z^n\otimes \xi) \text{ whenever }n\geq 1 \text{ and }\xi\in\cF.
$$
We now compute
$$
V_2V_1^*(1\otimes \xi)=U^*PU^*P^\perp\xi+\overline q z\otimes U^*P^\perp U^*P^\perp\xi
$$and
$$
V_1^*V_2(1\otimes \xi)=U^*P^\perp U^*P\xi+\overline q^2z\otimes U^*P^\perp U^*P^\perp\xi +\overline qU^*PU^*P^\perp \xi.
$$Therefore $V_2^*V_1=qV_1^*V_2$ if and only if for every $\xi\in\cF$,
$$
V_2V_1^*(1\otimes \xi)=V_1^*V_2(1\otimes \xi),
$$which, in view of the above computation, is true if and only if
$$
PUP^\perp=0.
$$ This completes the proof of $(1)\Leftrightarrow(2)$. To complete the proof of the theorem, one can either work with the BCL-2 $q$-model in \eqref{iso-model2} and proceed as before to prove $(1)\Leftrightarrow(3)$, or, simply apply Remark \ref{R:BCLtuples} and establish the equivalence of $(2)$ and $(3)$.
\end{proof}
We wish to establish a connection between double commutativity and $q$-double commutativity in analogue of Theorem \ref{T:comm&qcomm}. We first observe the following.
\begin{lemma}\label{L:aux2}
The BCL-1 model
$$
(M_{(P^\perp + z P)U} \oplus W_{1}, M_{U^*(P + z P^\perp)} \oplus W_{2})
$$of a commutative pair of isometries is doubly commutative if and only if $PUP^\perp=0$.
\end{lemma}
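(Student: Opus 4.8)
The plan is to reduce Lemma \ref{L:aux2} to the already-proved Theorem \ref{T:qdoublycomm} by specializing to the case $q=1$. Indeed, when $q=1$ the rotation operator $R_1$ is the identity on $H^2$, so the BCL-1 $q$-model \eqref{iso-model1} collapses to exactly the BCL-1 model displayed in the lemma: the two entries become $M_{(P^\perp+zP)U}\oplus W_1$ and $M_{U^*(P+zP^\perp)}\oplus W_2$. Moreover, for a unimodular number $q=1$, the condition \eqref{double-q} reads $V_2V_1^*=V_1^*V_2$, which is precisely the definition of double commutativity recalled just before Theorem \ref{T:qdoublycomm}. So the lemma is the $q=1$ instance of the equivalence $(1)\Leftrightarrow(2)$ in that theorem.

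The one point that needs care is that Theorem \ref{T:qdoublycomm} is stated for a pair that is assumed at the outset to be $q$-commutative; for $q=1$ this means the pair in the lemma must genuinely be commutative before we can invoke the theorem. But this is built into the hypothesis of the lemma (``of a commutative pair of isometries''), and in any case the remark following Definition \ref{D:Doublycomm} shows that for isometries condition \eqref{double-q} forces $q$-commutativity, so no loss occurs. Thus I would simply write: apply Theorem \ref{T:qdoublycomm} with $q=1$; since $R_1=I_{H^2}$, its BCL-1 $q$-model is the model displayed in the lemma, and its doubly $q$-commutativity condition $V_2V_1^*=V_1^*V_2$ is double commutativity. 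Hence, by the equivalence $(1)\Leftrightarrow(2)$ of Theorem \ref{T:qdoublycomm}, the model is doubly commutative if and only if $PUP^\perp=0$.

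Alternatively, if one prefers a self-contained argument, one can repeat the computation in the proof of Theorem \ref{T:qdoublycomm} verbatim with every occurrence of $R_q$ replaced by $I$: expand $V_2V_1^*$ and $V_1^*V_2$ on the elementary tensors $z^n\otimes\xi$, observe that for $n\ge 1$ the two agree automatically (here even more simply, since no rotation factors intervene), and then on $n=0$ compare $V_2V_1^*(1\otimes\xi)$ with $V_1^*V_2(1\otimes\xi)$ to isolate the single obstruction $PUP^\perp=0$. There is no real obstacle here; the only thing worth flagging is the bookkeeping check that the $q=1$ specialization of \eqref{iso-model1} indeed matches the model in the lemma, which it does term by term.
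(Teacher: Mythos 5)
Your reduction is correct and non-circular: Theorem \ref{T:qdoublycomm} is proved before Lemma \ref{L:aux2} and does not use it, its proof is valid for every unimodular $q$ including $q=1$, and at $q=1$ the BCL-1 $q$-model \eqref{iso-model1} does collapse (since $R_1=I_{H^2}$) to the pair displayed in the lemma, while doubly $q$-commutativity becomes $V_2V_1^*=V_1^*V_2$, i.e.\ double commutativity. The route is genuinely different from the paper's, however: the paper proves the lemma by a short self-contained operator computation on the pure part, expanding $V_2^*V_1$ and $V_1V_2^*$ for $(I_{H^2}\otimes P^\perp U+M_z\otimes PU,\, I_{H^2}\otimes U^*P+M_z\otimes U^*P^\perp)$ and cancelling to get the identity $V_2^*V_1-V_1V_2^*=(I-M_zM_z^*)\otimes PUP^\perp U$, which localizes the obstruction to the constants and immediately gives the equivalence with $PUP^\perp U=0$, i.e.\ $PUP^\perp=0$. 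Your specialization buys economy and makes transparent that the lemma is just the $q=1$ instance of Theorem \ref{T:qdoublycomm}; the paper's computation buys an explicit formula for the commutation defect and independence from the earlier theorem. One small point you gloss over: Theorem \ref{T:qdoublycomm} is phrased in terms of \emph{the} BCL-1 $q$-tuple of the pair, so to apply it to the model pair you should note either that the displayed data $(\cF,\cK_u;P,U,W_1,W_2)$ serves as a BCL-1 tuple for that pair (the proof of the theorem in fact works with the model carrying arbitrary parameters), or that by Theorem \ref{T:uniqueness} any two BCL-1 tuples are related by a unitary $\w$ with $\w(P,U)=(P',U')\w$, so the condition $PUP^\perp=0$ does not depend on the choice; with that remark your argument is complete. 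Your fallback suggestion (redo the coefficient comparison on $z^n\otimes\xi$ with $R_q$ replaced by $I$) is also fine, and is essentially the proof of Theorem \ref{T:qdoublycomm} at $q=1$ rather than the paper's proof of the lemma.
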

\begin{proof}
Since a commuting pair of unitaries is automatically doubly commuting, we only investigate the doubly commutativity of the pair
$$
(V_1,V_2)=(I_{H^2}\otimes P^\perp U+ M_z\otimes PU , I_{H^2}\otimes U^*P + M_z\otimes U^*P^\perp).
$$
We note that
\begin{align*}
V_2^* V_1 & = ( I_{H^2} \otimes P  U  + M_z^*\otimes P^\perp U )
(I_{H^2}\otimes P^\perp U+ M_z\otimes PU) \\
& = I_{H^2}\otimes P  U {P^\perp} U   + M_z \otimes P  U  P  U + M_z^*\otimes{P^\perp}U{P^\perp}U  + I_{H^2}\otimes{P^\perp} U  P  U
\end{align*}
and
\begin{align*}
V_1 V_2^* & = (I_{H^2}\otimes P^\perp U+ M_z\otimes PU)(I_{H^2} \otimes P  U  + M_z^*\otimes P^\perp U)  \\
& = I_{H^2} \otimes {P^\perp}U P  U    +  M_z \otimes P  U  P  U
+ M_z^*\otimes {P^\perp}U{P^\perp}U   +  M_z M_z^* \otimes P  U {P^\perp} U  .
\end{align*}
From the above two expressions, we see after cancellation of common terms that
\begin{align*}
&V_2^* V_1 - V_1 V_2^*  =  (I - M_z M_z^*) \otimes  P  U {P^\perp} U .
\end{align*}
Since $ I_{H^2} - M_z M_z^*$ is the projection of $H^2$ on the constant functions in $H^2$, we see that $V_1$ double commutes with $V_2$ exactly when
$ P  U {P^\perp} U = 0$, or, equivalently, $ P  U {P^\perp} = 0$.
\end{proof}
\begin{thm}\label{T:dcomm&dqcomm}
Let $V_1$ and $V_2$ be isometries such that $V=V_1V_2$ is a shift, and $\mathfrak r_q$ be the unitary as in \eqref{cq}. Then
\begin{enumerate}
\item $(V_1,V_2)$ is doubly commutative if and only if $(V_1\mathfrak r_q,\mathfrak r_{\overline q} V_2)$ is doubly $q$-commutative;
\item $(V_1,V_2)$ is doubly $q$-commutative if and only if $(V_1\mathfrak r_{\overline q},\mathfrak r_{q} V_2)$ is doubly commutative.
\end{enumerate}
\end{thm}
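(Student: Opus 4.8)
The plan is to combine the twisting device of Theorem~\ref{T:comm&qcomm} with the two ``$PUP^\perp=0$'' descriptions of double commutativity already available: Lemma~\ref{L:aux2} for commutative pairs and Theorem~\ref{T:qdoublycomm} for $q$-commutative pairs. I prove (1) directly and then obtain (2) from it by a substitution, so that essentially no new computation is needed.

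For part (1), I would begin by noting, via Theorem~\ref{T:comm&qcomm}(1), that $(V_1,V_2)$ is commutative exactly when $(V_1\mathfrak r_q,\mathfrak r_{\overline q}V_2)$ is $q$-commutative; hence it suffices to show, under this common hypothesis, that one pair is doubly commutative iff the other is doubly $q$-commutative. So assume $(V_1,V_2)$ is commutative with $V=V_1V_2$ a shift, and let $(\cF;P,U)$ be a BCL-1 tuple for it (no unitary part, since $V$ is a shift). Exactly as in the proof of Theorem~\ref{T:comm&qcomm}, conjugation by the unitary $\tau_{\rm BCL}$ of \eqref{puretau}, together with $\tau_{\rm BCL}\mathfrak r_q\tau_{\rm BCL}^*=R_q$, gives
\[
\tau_{\rm BCL}\bigl(V_1\mathfrak r_q,\ \mathfrak r_{\overline q}V_2\bigr)\tau_{\rm BCL}^*=\bigl(M_{(P^\perp+zP)U}R_q,\ R_{\overline q}M_{U^*(P+zP^\perp)}\bigr),
\]
which is precisely the BCL-1 $q$-model attached to the $q$-tuple $(\cF;P,U)$; in particular $(\cF;P,U)$ is a BCL-1 $q$-tuple of $(V_1\mathfrak r_q,\mathfrak r_{\overline q}V_2)$. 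Now Lemma~\ref{L:aux2} says $(V_1,V_2)$ is doubly commutative iff $PUP^\perp=0$, while Theorem~\ref{T:qdoublycomm} says $(V_1\mathfrak r_q,\mathfrak r_{\overline q}V_2)$ is doubly $q$-commutative iff $PUP^\perp=0$, with the \emph{same} $P$ and $U$; comparing the two yields~(1).

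Part (2) then follows by applying (1) to the pair $(\widetilde V_1,\widetilde V_2):=(V_1\mathfrak r_{\overline q},\mathfrak r_q V_2)$. Since $R_{\overline q}R_q=I$ on $H^2$, one has $\mathfrak r_{\overline q}\mathfrak r_q=\tau_{\rm BCL}^*R_{\overline q}R_q\tau_{\rm BCL}=I$, so $\widetilde V_1\widetilde V_2=V_1V_2=V$ is unchanged and still a shift; moreover, by \eqref{rq} the unitary $\mathfrak r_q$ depends only on this product, so the unitary attached to $(\widetilde V_1,\widetilde V_2)$ is again $\mathfrak r_q$. Part~(1) applied to $(\widetilde V_1,\widetilde V_2)$ reads: $(\widetilde V_1,\widetilde V_2)$ is doubly commutative iff $(\widetilde V_1\mathfrak r_q,\mathfrak r_{\overline q}\widetilde V_2)$ is doubly $q$-commutative. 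Since $\widetilde V_1\mathfrak r_q=V_1\mathfrak r_{\overline q}\mathfrak r_q=V_1$ and $\mathfrak r_{\overline q}\widetilde V_2=\mathfrak r_{\overline q}\mathfrak r_q V_2=V_2$, this is exactly the statement of~(2).

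I expect the only delicate point to be the identification, in part~(1), of one data set $(\cF;P,U)$ that simultaneously serves as a BCL-1 tuple of the commutative pair and as a BCL-1 $q$-tuple of the $q$-commutative pair, so that the two occurrences of $PUP^\perp=0$ are literally the same condition rather than merely equivalent ones. This is exactly what the explicit $\tau_{\rm BCL}$-intertwining borrowed from the proof of Theorem~\ref{T:comm&qcomm} delivers; and should one prefer to use a different BCL tuple on one side, the relation \eqref{coin} of Theorem~\ref{T:uniqueness} shows that the condition $PUP^\perp=0$ is unaffected by the choice, so no ambiguity arises.
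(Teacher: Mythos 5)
Your proposal is correct and follows essentially the same route as the paper: identify $(V_1\mathfrak r_q,\mathfrak r_{\overline q}V_2)$ with the BCL-1 $q$-model built from the same tuple $(\cF;P,U)$ via the $\tau_{\rm BCL}$-conjugation from the proof of Theorem \ref{T:comm&qcomm}, and then let Lemma \ref{L:aux2} and Theorem \ref{T:qdoublycomm} reduce both double-commutativity statements to the single condition $PUP^\perp=0$, with part (2) obtained from part (1) by substitution. Your explicit justification that the substitution is legitimate --- namely that $\mathfrak r_{\overline q}\mathfrak r_q=I$ and that, by \eqref{rq}, $\mathfrak r_q$ depends only on the product $V=V_1V_2$ --- is a detail the paper leaves implicit, and it is a welcome addition rather than a deviation.
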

\begin{proof}
The proof is similar to that of Theorem \ref{T:comm&qcomm}. For part (1), suppose $(V_1,V_2)$ is a commutative and $(\cF;P,U)$ is a BCL-1 tuple of $(V_1,V_2)$. By Theorem \ref{T:qBCL}, 
\begin{align}\label{Int1'}
\tau_{\rm BCL}(V_1,V_2)=(M_{(P^\perp+zP)U},M_{U^*(P+zP^\perp)})\tau_{\rm BCL}.
\end{align}where $\tau_{\rm BCL}:\cH\to H^2(\cF)$ is the unitary as in \eqref{puretau}.  Suppose that $(V_1,V_2)$ is doubly commutative. Hence by Lemma \ref{L:aux2}, we have $PUP^\perp=0$. By Theorem \ref{T:qdoublycomm}, this is equivalent to the BCL-1 $q$-model $(M_{(P^\perp+zP)U}R_q,R_{\overline q}M_{U^*(P+zP^\perp)})$ being doubly $q$-commutative. But as observed in the proof of Theorem \ref{T:comm&qcomm},
\begin{align*}
(M_{(P^\perp+zP)U}R_q,R_{\overline q}M_{U^*(P+zP^\perp)})=(V_1'\tau_{\rm BCL}\mathfrak r_q\tau_{\rm BCL}^*,\tau_{\rm BCL}\mathfrak r_{\overline q}\tau_{\rm BCL}^*V_2')=\tau_{\rm BCL}(V_1\mathfrak r_q,\mathfrak r_{\overline q} V_2)\tau_{\rm BCL}^*.
\end{align*}Therefore equivalently, the pair $(V_1\mathfrak r_q,\mathfrak r_{\overline q} V_2)$ must also be doubly $q$-commutative. Now part (1) implies part (2) and therefore the proof is complete.
\end{proof}
%\begin{thm}
%\textcolor{red}{Is there an abstract version of the above theorem?}
%\end{thm}
S\l oci\'nski \cite{Slo1980} proved that any pair of doubly commuting shift operators is unitarily equivalent to $(M_{z_1},M_{z_2})$ on $H^2(\mathbb D^2)$. As a corollary to Theorem \ref{T:dcomm&dqcomm}, we get the following analogue of S\l oci\'nski's result in the $q$-commutative setting.
\begin{corollary}\label{C:DqCommShift}
A pair of shift operators $(V_1,V_2)$ is doubly $q$-commutative if and only if it is unitarily equivalent to $(M_{z_1}\mathfrak s_q, \mathfrak s_{\overline q}M_{z_2})$ on $H^2(\mathbb D^2)$ for some unitary $\mathfrak s_q$ on $H^2(\mathbb D^2)$.
\end{corollary}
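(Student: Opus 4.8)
The plan is to deduce this directly from Theorem~\ref{T:dcomm&dqcomm} together with S\l oci\'nski's theorem. First I would observe that in both directions we are dealing with pairs of shift operators, so the auxiliary unitary $\mathfrak r_q$ associated with $V=V_1V_2$ (or with the product of the two shifts in question) is well-defined via \eqref{cq}. For the forward direction, suppose $(V_1,V_2)$ is a doubly $q$-commutative pair of shifts. Then $V=V_1V_2$ is a shift (it is an isometry, and since $V_1,V_2$ are both pure one checks $V^{*n}\to 0$ strongly), so $\mathfrak r_q$ makes sense; by part~(2) of Theorem~\ref{T:dcomm&dqcomm}, the pair $(V_1\mathfrak r_{\overline q},\mathfrak r_q V_2)$ is doubly commutative. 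One must check that $V_1\mathfrak r_{\overline q}$ and $\mathfrak r_q V_2$ are again shifts: they are isometries (product of an isometry and a unitary), and their product equals $V_1\mathfrak r_{\overline q}\mathfrak r_q V_2 = V_1 V_2 = V$, which is a shift, forcing both factors to be shifts (a factor of a pure isometry through a doubly commuting decomposition is pure). Hence by S\l oci\'nski's theorem there is a unitary $\sigma:\cH\to H^2(\mathbb D^2)$ with $\sigma(V_1\mathfrak r_{\overline q},\mathfrak r_q V_2)\sigma^* = (M_{z_1},M_{z_2})$.

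Next I would transport $\mathfrak r_q$ through $\sigma$: set $\mathfrak s_q := \sigma \mathfrak r_q \sigma^*$, a unitary on $H^2(\mathbb D^2)$, noting $\mathfrak s_{\overline q} = \sigma\mathfrak r_{\overline q}\sigma^* = \mathfrak s_q^*$. Then $\sigma V_1 \sigma^* = \sigma (V_1\mathfrak r_{\overline q})\mathfrak r_q\sigma^* = M_{z_1}\mathfrak s_q$ and similarly $\sigma V_2\sigma^* = \mathfrak s_{\overline q} M_{z_2}$, after inserting $\sigma^*\sigma = I$ appropriately. This gives the required unitary equivalence $(V_1,V_2)\cong (M_{z_1}\mathfrak s_q,\mathfrak s_{\overline q}M_{z_2})$.

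For the converse, suppose $(V_1,V_2)$ is unitarily equivalent to $(M_{z_1}\mathfrak s_q,\mathfrak s_{\overline q}M_{z_2})$ for some unitary $\mathfrak s_q$ on $H^2(\mathbb D^2)$ with $\mathfrak s_{\overline q}:=\mathfrak s_q^*$. It suffices to verify the claim for the model pair itself. The product is $M_{z_1}\mathfrak s_q\mathfrak s_{\overline q}M_{z_2} = M_{z_1}M_{z_2} = M_{z_1 z_2}$, which is a shift on $H^2(\mathbb D^2)$, so each factor $M_{z_1}\mathfrak s_q$, $\mathfrak s_{\overline q}M_{z_2}$ is an isometry whose product is a shift — hence both are shifts. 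Now $(M_{z_1}\mathfrak s_q \cdot \mathfrak s_{\overline q}, \mathfrak s_q \cdot \mathfrak s_{\overline q}M_{z_2}) = (M_{z_1}, M_{z_2})$, which is the model pair with $V_1\mapsto M_{z_1}\mathfrak s_q$, $V_2\mapsto \mathfrak s_{\overline q}M_{z_2}$ and auxiliary unitary $\mathfrak s_q$. The one technical point to settle is that $\mathfrak s_q$ is exactly (conjugate to) the canonical $\mathfrak r_q$ attached to the product shift $M_{z_1 z_2}$; this follows because once we know $(M_{z_1},M_{z_2})$ is doubly commuting and has product a shift, Theorem~\ref{T:dcomm&dqcomm}(1) applied to it produces $(M_{z_1}\mathfrak r_q, \mathfrak r_{\overline q}M_{z_2})$ as doubly $q$-commutative, and the uniqueness in Theorem~\ref{T:uniqueness} (applied after reading off BCL-1 $q$-tuples) pins down $\mathfrak s_q$ and $\mathfrak r_q$ to be intertwined by a unitary, so any unitary $\mathfrak s_q$ giving this form works. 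Concretely, it is cleanest to simply apply Theorem~\ref{T:dcomm&dqcomm}(1) with $\mathfrak r_q$ in place of $\mathfrak s_q$ to get the ``if'' direction for that canonical choice, and then note that feeding back through part (2) shows any $\mathfrak s_q$ of the displayed form likewise yields a doubly $q$-commutative pair, since $(M_{z_1}\mathfrak s_q\mathfrak s_{\overline q}, \mathfrak s_q\mathfrak s_{\overline q}M_{z_2})=(M_{z_1},M_{z_2})$ is doubly commuting and $\mathfrak s_q$ $q$-commutes with $M_{z_1 z_2}$ (a short direct check, since $\mathfrak s_q$ arises from conjugating $R_q$, or simply because $\mathfrak s_q = \mathfrak r_q$ up to the unitary equivalence we built).

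The main obstacle I anticipate is the bookkeeping around purity and the identification of $\mathfrak s_q$: one has to be careful that all the operators obtained by composing a shift with the unitary $\mathfrak r_q$ remain shifts (so that S\l oci\'nski applies and so that the converse stays inside the shift class), and that the ``$\mathfrak s_q$'' produced abstractly is genuinely a free parameter rather than a rigidly determined object — which is handled by invoking Theorem~\ref{T:dcomm&dqcomm} in both directions rather than trying to compute $\mathfrak r_q$ for the polydisk model by hand. No genuinely hard estimate is involved; it is essentially a transport-of-structure argument layered on the two cited theorems.
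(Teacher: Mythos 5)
Your forward direction follows the paper's proof step for step: show $V=V_1V_2$ is a shift, apply Theorem~\ref{T:dcomm&dqcomm}(2) to get the doubly commutative pair $(V_1\mathfrak r_{\overline q},\mathfrak r_qV_2)$, invoke S\l oci\'nski to get $\tau_S$, and set $\mathfrak s_q:=\tau_S\mathfrak r_q\tau_S^*$. The one place where you add something the paper leaves implicit is also where your argument breaks: you justify that $V_1\mathfrak r_{\overline q}$ and $\mathfrak r_qV_2$ are shifts by the principle that ``a factor of a pure isometry through a doubly commuting decomposition is pure.'' That principle is false: the pair $(I,M_z)$ on $H^2$ is doubly commuting with pure product $M_z$, yet the factor $I$ is unitary. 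Purity of the product only kills the unitary--unitary piece of the S\l oci\'nski--Wold decomposition, not the mixed pieces. The conclusion you need is true here, but it has to be extracted from the specific structure: e.g.\ transport everything by $\tau_{\rm BCL}$, so that $V_1\mathfrak r_{\overline q}\cong M_{(P^\perp+zP)U}$ and $V_1\cong M_{(P^\perp+zP)U}R_q$ with $PUP^\perp=0$ (Theorem~\ref{T:qdoublycomm}); the relation $PUP^\perp=0$ collapses the words in the expansions of $(V_1\mathfrak r_{\overline q})^{*n}$ and $V_1^{*n}$ so that corresponding components differ only by unimodular factors, whence purity of $V_1$ (and $V_2$) forces purity of $V_1\mathfrak r_{\overline q}$ (and $\mathfrak r_qV_2$). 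Some such argument is needed before S\l oci\'nski can be applied; your stated reason would not survive scrutiny, although, to be fair, the paper asserts the applicability of S\l oci\'nski without comment.

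Your treatment of the converse also cannot work as written, and the reason is that for a completely arbitrary unitary $\mathfrak s_q$ with $\mathfrak s_{\overline q}=\mathfrak s_q^*$ the statement is simply false: taking $\mathfrak s_q=I$ gives the pair $(M_{z_1},M_{z_2})$, which is commutative, not $q$-commutative for $q\neq 1$. So the ``if'' direction can only hold when $\mathfrak s_q$ carries extra structure, namely the rotation-type intertwining $\mathfrak s_qM_{z_1z_2}=qM_{z_1z_2}\mathfrak s_q$ (as in \eqref{abstractR}), which is what the constructed $\mathfrak s_q=\tau_S\tau_{\rm BCL}^*R_q\tau_{\rm BCL}\tau_S^*$ satisfies; your appeal to Theorem~\ref{T:uniqueness} to ``pin down'' an arbitrary $\mathfrak s_q$ as conjugate to $\mathfrak r_q$ is circular, since that identification is exactly what fails for a generic unitary. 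Note that the paper's own proof only establishes the ``only if'' direction, with $\mathfrak s_q$ understood to be this transported rotation, so the honest fix is either to read the corollary with that structural meaning of $\mathfrak s_q$ (and prove the converse via the $q$-intertwining property, which Lemma~\ref{L:aux2} plus Theorem~\ref{T:qdoublycomm} or the abstract theorem following Theorem~\ref{T:comm&qcomm} handles), or to restrict your converse to that class rather than to ``any unitary $\mathfrak s_q$ giving this form.''
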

\begin{proof}
Suppose that $(V_1,V_2)$ is doubly $q$-commutative pair of shift operators. It is a general fact that if $(V_1,V_2)$ is $q$-commutative pair of isometries with one of the entries a shift, then the product $V=V_1V_2$ is also a shift. To see this we shall use the general fact that if {\em if $(T_1,T_2)$ is $q$-commutative, then with $T=T_1T_2$,
\begin{align}\label{aux7}
T^n=\overline q^{x_n}T_1^nT_2^n=q^{y_n}T_2^nT_1^n\text{ for every }n\geq1,
\end{align} where the sequences $\{x_n\}_{n\geq1}$ and $\{y_n\}_{n\geq1}$ are given by the iterative relations
 $$
 x_1=0,\;x_n=x_{n-1}+n-1, \text{ and }y_1=1,\; y_n=y_{n-1}+n.
 $$}
We omit the proof of \eqref{aux7} as it is routine. Applying this fact to the $q$-commutative pair $(V_1,V_2)$ of shift operators, we see that
$$
V^{*n}=q^{x_n}V_2^{*n}V_1^{*n}\to 0 \text{ as }n\to\infty.
$$Invoking part (2) of Theorem \ref{T:dcomm&dqcomm} we get $(V_1\mathfrak r_{\overline q},\mathfrak r_{q} V_2)$ is doubly commutative, where $\mathfrak r_q$ is the unitary as in \eqref{cq}. By S\l oci\'nski's characterization of doubly commutative pair of shifts, there exist a unitary $\tau_S:\cH\to H^2(\mathbb D^2)$ such that
 \begin{align}\label{DiskBidisk}
 \tau_S(V_1\mathfrak r_{\overline q},\mathfrak r_{q} V_2)=(M_{z_1},M_{z_2})\tau_S.
 \end{align}The first component of \eqref{DiskBidisk} gives $\tau_S V_1=M_{z_1}\tau_S \mathfrak r_q=M_{z_1}\tau_S \mathfrak r_q\tau_S^*\cdot \tau_S$. This and a similar treatment for the second component give
$$
\tau_S(V_1,V_2)=(M_{z_1}\tau_S \mathfrak r_{ q}\tau_S^*,\tau_S\mathfrak r_{\overline q}\tau_S^*M_{z_2})\tau_S,
$$which readily implies that with
$$
\mathfrak s_q:=\tau_S \mathfrak r_{ q}\tau_S^*=\tau_S\tau_{\rm BCL}^*R_q\tau_{\rm BCL}\tau_S^*
$$the doubly $q$-commutative pair $(V_1,V_2)$ is unitarily equivalent to $(M_{z_1}\mathfrak s_q,\mathfrak s_{\overline q}M_{z_2})$.

\end{proof}

\section{Examples}\label{S:Examples}
It is interesting to work with some concrete examples to illustrate the model theory. First we exhibit a simple example of a pair of isometric operators that is doubly $q$-commutative.
\begin{example}
Consider the pair $(V_1,V_2)=(R_q,M_z)$ on the Hardy space $H^2$. We have seen in the introduction that this pair is $q$-commutative. To see that this is doubly $q$-commutative, we prove the general fact that if a pair $(T_1,T_2)$ is $q$-commutative and $T_1$ is unitary, then it is doubly $q$-commutative. For this we simply multiply $T_1T_2=qT_2T_1$ by $T_1^*$ from right and left successively, to get $T_2T_1^*=qT_1^*T_2$. It is interesting to note that if, instead of $T_1$, $T_2$ is unitary, then $(T_1,T_2)$ would be doubly $\overline q$-commutative.  Below we illustrate the equivalence of $(1)$ and $(2)$ of Theorem \ref{T:qBCL} for this particular example. First we compute explicitly the BCL-1 $q$-tuple for this pair.

Let us first note that if $(V_1,V_2)=(R_q,M_z)$, then
$$
\sbm{D_{V_1^*}\\D_{V_2^*}}=\sbm{0\\P_{\mathbb C}}:\sbm{H^2\\H^2}\to\sbm{H^2\\H^2}, \text{ and therefore }\cF=\sbm{\cD_{V_1^*}\\\cD_{V_2^*}}=\sbm{0\\\mathbb C},
$$where $P_{\mathbb C}$ is the orthogonal projection of $H^2$ onto the constant functions. Let $f(z)=a_0+za_1+\cdots+z^na_n+\cdots$ be in $H^2$. We note that
$$
D_{V_1^*}V_2^*f=0 \text{ and }D_{V_2^*}V_1^*f=D_{M_z^*}R_{\overline q}=a_0.
$$Therefore
$$
U:\sbm{D_{V_1^*}\\D_{V_2^*}V_1^*}=\sbm{0\\ a_1}\mapsto\sbm{0\\a_0}=\sbm{D_{V_1^*}V_2^*\\D_{V_2^*}}
$$is essentially $I_{\mathbb C^2}$. It is interesting to note that if $P$ is the projection of $\cD_{V_1^*}\oplus\cD_{V_2^*}$ onto $\cD_{V_1^*}$ (which is zero), then $P$ is essentially $\sbm{0&0\\0&0}$, while $P^\perp=\sbm{0&0\\0&1}$.  Since $(R_q,M_z)$ is $q$-commutative, applying \eqref{aux7} to the pair $(R_q,M_z)$, we get with $V=R_qM_z$
$$
V^{*n}f=\overline q^{y_n}R_{\overline q^n}{M_z^*}^{n}f=\overline q^{y_n}(a_{n},\overline q^na_{n+1},\overline q^{2n}a_{n+2},\dots).
$$Since $V=V_1V_2$ on $H^2$ is a shift operator, the general unitary identification $\tau_{\rm BCL}$ from $H^2$ onto $H^2\otimes \sbm{\cD_{V_1^*}\\\cD_{V_2^*}}$, which is of the form (as shown in \eqref{tau})
$$
\tau_{\rm BCL} h= \sbm{D_{V_1^*}\\ D_{V_2^*}V_1^*}h +z\sbm{D_{V_1^*}\\ D_{V_2^*}V_1^*}V^*h+z^2\sbm{D_{V_1^*}\\ D_{V_2^*}V_1^*}V^{* 2}h+\cdots
$$is given in this case as
\begin{align}\label{tauex1}
f\mapsto \sbm{0\\a_0}+ z\overline q\sbm{0\\a_1}+\cdots+z^n\overline q^{y_n}\sbm{0\\a_n}+\cdots.
\end{align}Therefore
\begin{align*}
(P^\perp U+M_zPU)R_q\tau_{\rm BCL} f(z)&=\sbm{0&0\\0&1}\left(\sbm{0\\a_0}+ z\overline q^2\sbm{0\\a_1}+\cdots+z^n\overline q^{y_n+n}\sbm{0\\a_n}+\cdots\right)\\
 &=\left(\sbm{0\\a_0}+ z\overline q^2\sbm{0\\a_1}+\cdots+z^n\overline q^{y_n+n}\sbm{0\\a_n}+\cdots\right)=R_q \tau_{\rm BCL} f(z).
\end{align*} Similar computation for the intertwining relation $R_{\overline q}(U^*P+M_zU^*P^\perp)\tau_{\rm BCL}=R_{\overline q}M_z\tau_{\rm BCL}$.

In view of Theorem \ref{T:qdoublycomm}, that the pair $(R_qM_z,M_z)$ is doubly $q$-commutative is reflected in the fact that
$$
PUP^\perp=\sbm{0&0\\0&0}\sbm{1&0\\0&1}\sbm{0&0\\0&1}=\sbm{0&0\\0&0}.
$$
\end{example}

Next we find an example of a pair of shift operators that is $q$-commutative but not doubly $q$-commutative.
\begin{example}
Consider the pair $(V_1,V_2)=(R_qM_z,M_z)$ on $H^2$. Then for every $f\in H^2$,
\begin{align*}
&V_1V_2f(z)=R_qM_z^2f(z)=q^2z^2f(qz) \text{ while},\\
&V_2V_1f(z)=M_zR_qM_zf(z)=M_zqzf(qz)=qz^2f(qz),
\end{align*}showing that $(V_1,V_2)$ is a $q$-commutative pair. However, it should be noted that the pair is not doubly $q$-commutative. One way to see this is that
\begin{align*}
V_2V_1^*(1)=M_zM_z^*R_{\overline q}(1)=0 \text{ but }V_1^*V_2(1)=M_z^*R_{\overline q}M_z(1)=\overline q.
\end{align*}Therefore $V_2V_1^*\neq q V_1^*V_2$. To see that $V_1=R_qM_z$ is actually a shift operator, we apply \eqref{aux7} to the $q$-commutative pair $(R_q,M_z)$ to note
$$
V_1^{*n}=(M_z^*R_{\overline q})^n=\overline q^{y_n}R_{\overline q^n}M_z^{*n}\to 0 \text{ in the strong operator topology as }n\to\infty.
$$Below we compute the BCL-1 $q$-tuple corresponding to the pair $(V_1,V_2)=(R_qM_z,M_z)$. Let us first note that $D_{V_1^*}=I-V_1V_1^*=I-R_qM_zM_z^*R_{\overline q}=R_qD_{M_z^*}R_{\overline q}$, which is essentially the same as $D_{M_{z^*}}=P_{\mathbb C}$. Therefore
$$
\sbm{D_{V_1^*}\\ D_{V_2^*}}=\sbm{P_{\mathbb C}\\P_{\mathbb C}}:\sbm{H^2\\H^2}\to\sbm{H^2\\H^2}, \text{ and therefore }\sbm{\cD_{V_1^*}\\\cD_{V_2^*}}=\sbm{\mathbb C\\\mathbb C}.
$$Let $f(z)=a_0+za_1+\cdots+z^na_n+\cdots$ be in $H^2$. We note that
$$
D_{V_1^*}V_2^*f=D_{M_z^*}M_z^*f=a_1 \text{ and }D_{V_2^*}V_1^*f=D_{M_z^*}M_z^*R_{\overline q}=\overline q a_1.
$$Therefore
$$
U:\sbm{D_{V_1^*}\\D_{V_2^*}V_1^*}=\sbm{a_0\\\overline q a_1}\mapsto\sbm{a_1\\a_0}=\sbm{D_{V_1^*}V_2^*\\D_{V_2^*}}
$$is given by $U=\sbm{0&q\\0&1}$. Next we note that since $(R_q,M_z)$ is $q$-commutative, $(R_q,M_z^2)$ is $q^2$-commutative, and therefore applying \eqref{aux7} we get with $V=V_1V_2$
$$
V^{*n}f=\overline q^{2y_n}R_{\overline q^n}{M_z^*}^{2n}f=\overline q^{2y_n}(a_{2n},\overline q^na_{2n+1},\overline q^{2n}a_{2n+2},\dots).
$$Since $V$ is a shift operator, the unitary identification $\tau_{\rm BCL}$ from $H^2$ onto $H^2\otimes \sbm{\cD_{V_1^*}\\\cD_{V_2^*}}$ in this case is given by
\begin{align}\label{tauex}
f\mapsto \sbm{a_0\\\overline qa_1}+ z\overline q^2\sbm{a_2\\\overline q^2a_3}+\cdots+z^n\overline q^{2y_n}\sbm{a_{2n}\\\overline q^{n+1}a_{2n+1}}+\cdots.
\end{align}To demonstrate that this $\tau_{\rm BCL}$ intertwines $(V_1,V_2)$ and $(M_{(P^\perp+zP)U}R_{ q},R_{\overline q}M_{U^*(P+zP^\perp)})$, we compute
\begin{align*}
P^\perp UR_q\tau_{\rm BCL} f&=P^\perp U\left(\sbm{a_0\\\overline qa_1}+ z\overline q\sbm{a_2\\\overline q^2a_3}+\cdots+z^n\overline q^{2y_n-n}\sbm{a_{2n}\\\overline q^{n+1}a_{2n+1}}+\cdots\right)\\
&=P^\perp\left(\sbm{a_1\\a_0}+ z\overline q\sbm{\overline qa_3\\a_2}+\cdots+z^n\overline q^{2y_n-n}\sbm{\overline q^{n}a_{2n+1}\\a_{2n}}+\cdots\right)\\
&=\sbm{0\\a_0}+ z\overline q\sbm{0\\a_2}+\cdots+z^n\overline q^{2y_n-n}\sbm{0\\a_{2n}}+\cdots
\end{align*}and (using the action of $UR_q\tau_{\rm BCL} f$ from the above computation)
\begin{align*}
M_zPUR_q\tau_{\rm BCL} f&=M_zP\left(\sbm{a_1\\a_0}+ z\overline q\sbm{\overline qa_3\\a_2}+\cdots+z^n\overline q^{2y_n-n}\sbm{\overline q^{n}a_{2n+1}\\a_{2n}}+\cdots\right)\\
&=z\sbm{a_1\\0}+ z^2\overline q\sbm{\overline qa_3\\0}+\cdots+z^{n+1}\overline q^{2y_n-n}\sbm{\overline q^{n}a_{2n+1}\\0}+\cdots.
\end{align*}Therefore
\begin{align}\label{finex2}
(P^\perp U +M_zPU)R_q\tau_{\rm BCL} f&=\sbm{0\\a_0}+z\sbm{a_1\\\overline q a_2}+\cdots+ z^n\overline q^{2y_{n-1}}\sbm{a_{2n-1}\\\overline q^{n}a_{2n}}+\cdots.
\end{align}We note that
$$V_1f(z)=R_qM_zf=za_0q+z^2a_1q^2+\cdots+z^na_{n-1}q^n+\cdots=:\sum_{n\geq0}z^nb_n.$$Therefore replacing $f$ by $V_1f$ in the expression \eqref{tauex} of $\tau_{\rm BCL}$, we get $\tau_{\rm BCL} V_1 f$ the same as $(P^\perp U +M_zPU)R_q\tau_{\rm BCL} f$. Similar computation for the other intertwining relation.

In view of Theorem \ref{T:qdoublycomm}, that the pair $(R_qM_z,M_z)$ is not doubly $q$-commutative is reflected in the fact that
$$
PUP^\perp=\sbm{1&0\\0&0}\sbm{0&q\\1&0}\sbm{0&0\\0&1}=\sbm{0&q\\0&0}\neq \sbm{0&0\\0&0}.
$$
\end{example}
The following couple of examples are interesting to note.
\begin{example}\label{E:2nd}
Consider the pair $(V_1,V_2)=(R_qM_{z_1},M_{z_2})$ on $H^2(\mathbb D^2)$. We have noticed in the Introduction that this is indeed $q$-commuting. The computation below shows that it is actually doubly $q$-commutative.
%\begin{align*}
%&V_1V_2f(z_1,z_2)=R_qM_{z_1}M_{z_2}f(z_1,z_2)=q^2z_1z_2f(qz_1,qz_2), \text{ and}\\
%&V_2V_1f(z_1,z_2)=M_{z_2}R_qM_{z_1}f(z_1,z_2)=M_{z_2}qz_1f(qz_1,qz_2)=qz_1z_2f(qz_1,qz_2).
%\end{align*}Therefore $(V_1,V_2)$ is $q$-commutative. Also using the fact that $(M_{z_1},M_{z_2})$ is doubly commutative on $H^2(\mathbb D^2)$, we see that
\begin{align*}
&V_2V_1^*f(z_1,z_2)=M_{z_2}M_{z_1}^*R_{\overline q}f(z_1,z_2)=M_{z_2}M_{z_1}^*f(\overline qz_1,\overline qz_2)=M_{z_1}^*z_2f(\overline qz_1,\overline qz_2) \text{ and}\\
&V_1^*V_2f(z_1,z_2)=M_{z_1}^*R_{\overline q}M_{z_2}f(z_1,z_2)=\overline qM_{z_1}^*z_2f(\overline qz_1,\overline qz_2).
\end{align*}Consider the subspace $\cH_\diamond:=H^2(\mathbb D^2)\ominus \{\text{constants}\}$. Just like commutativity, it is trivial that $q$-commutativity property is hereditary, i.e., the restriction of a $q$-commutative pair is $q$-commutative. However, the restriction $(V_1',V_2')=(R_qM_{z_1},M_{z_2})|_{\cH_\diamond}$ is not doubly $q$-commutative as the following computation reveals:
$$
V_2'V_1'^*(z_1)=M_{z_2}M_{z_1}^*R_{\overline q}(z_1)=0\neq\overline qz_2=M_{z_1}^*\overline qz_1z_2=qM_{z_1}^*R_{\overline q}M_{z_2}(z_1) = qV_1'^*V_2'(z_1).
$$
\end{example}
It is interesting to have an example of a pair of isometries which is not $q$-commutative for any complex number $q$. Let $\alpha,\beta$ are two distinct numbers in $\mathbb T$. Consider
\begin{align*}
V_1=\sbm{\alpha&0\\0&\beta} \text{ and }V_2=\sbm{\frac{1}{\sqrt{2}}&\frac{1}{\sqrt{2}}\\\frac{1}{\sqrt{2}}&-\frac{1}{\sqrt{2}}}.
\end{align*}Then clearly
$$
V_1V_2=\sbm{\frac{1}{\sqrt{2}}\alpha&\frac{1}{\sqrt{2}}\alpha\\\frac{1}{\sqrt{2}}\beta&-\frac{1}{\sqrt{2}}\beta}\neq q\sbm{\frac{1}{\sqrt{2}}\alpha&\frac{1}{\sqrt{2}}\beta\\\frac{1}{\sqrt{2}}\alpha&-\frac{1}{\sqrt{2}}\beta}=qV_2V_1
$$for any number $q$, because $\alpha$ and $\beta$ are distinct.

\section{The tuple case}
In this section, we use the model for the pair case to exhibit a parallel model for tuples $(V_1,V_2,\dots,V_d)$ of $q$-commutative isometries. We first define $q$-commutativity for tuples of operators.
\begin{definition}\label{D:tuplesq}
Let $q:\{1,2,\dots,d\}\times\{1,2,\dots,d\}\to\mathbb T$ be a function such that $q(i,i)=1$ and $q(i,j)=\overline{q(j,i)}$ for each $i,j=1,2,\dots,d$. A $d$-tuple $(V_1,V_2,\dots,V_d)$ of operators is said to be {\em$q$-commutative}, if
$$
V_iV_j=q(i,j)V_jV_i \quad\mbox{for each }i,j=1,2,\dots,d.
$$
\end{definition}
As an example of a $q$-commutative tuple of isometries, let us define $V_j$ on $H^2(\mathbb D^d)$, the Hardy space of the $d$-disk, as
\begin{align}\label{TupleVj}
V_j=R_{q^{d-j}}M_{z_j} \text{ or } M_{z_j}R_{q^{d-j}} \text{ for each $j=1,2,\dots,d$,}
\end{align}and $q:\{1,2,\dots,d\}\times\{1,2,\dots,d\}\to\mathbb T$ as $q(i,j)=q^{j-i}$. To see that $(V_1,V_2,\dots,V_d)$ is $q$-commutative, we compute
\begin{align*}
V_iV_jf(\underline{z})=R_{q^{d-i}}M_{z_i}R_{q^{d-j}}M_{z_j}f(\underline{z})=q^{d-j}R_{q^{d-i}}z_iz_jf(q^{d-j}\underline{z})=q^{3d-2i-j}z_iz_jf(q^{2d-i-j}\underline{z})
\end{align*}while $V_jV_if(\underline{z})=q^{3d-i-2j}z_iz_jf(q^{2d-i-j}\underline{z})$ (obtained by just switching $(i,j)$ to $(j,i)$ in the above expression).

Let us denote
$$V_{(i)}:=V_1\cdots V_{i-1}V_{i+1}\cdots V_d.$$ A key observation that makes it possible to apply the results for the pair case to the general case, is that if $(V_1,V_2,\cdots,V_d)$ is $q$-commutative, then for each $i=1,2,\dots,d$, the pair $(V_i,V_{(i)})$ is $q_i$-commutative, where
\begin{align}\label{qi}
q_i:=\prod_{j=1}^d q(i,j).
\end{align}This is because for each $i$,
$$
V_iV_{(i)}=V_iV_1V_2\cdots V_{i-1}V_{i+1}\cdots V_d=\prod_{i\neq j=1}^dq(i,j)V_{(i)}V_i=q_iV_{(i)}V_i,
$$where we used the fact that $q(i,i)=1$.
This observation makes it easy to obtain a Berger--Coburn--Lebow-type model for any $q$-commutative tuples of isometries $(V_1,V_2,\dots,V_d)$. Indeed, the idea is to just apply Theorem \ref{T:qBCL} to each of the $q_i$-commutative pairs $(V_i,V_{(i)})$. However, unlike the pair case, a BCL-1 and BCL-2 $q$-models need not in general be $q$-commutative. This will happen when the BCL-1 and BCL-2 $q$-tuples satisfy some compatibility conditions.
\begin{thm}\label{T:bqBCL}
Let $(V_1,V_2,\dots, V_d)$ be a $d$-tuple of $q$-commutative isometries. Then
 \begin{enumerate}
 \item {\bf BCL-1 $q$-model:} there exist Hilbert spaces $\cF$ and $\cK_u$, projections $P_1, P_2,\dots,P_d$ and unitaries $U_1,U_2,\dots,U_d$ in $\cB(\cF)$, and a $q$-commutative tuple $(W_1,W_2,\dots,W_d)$ of unitaries in $\cB(\cK_u)$ such that for each $i=1,2,\dots,d$, $V_i$ is unitarily equivalent to
\begin{align}\label{iso-model1'}
\sbm{R_{q_i}\otimes P_i^\perp U_i + M_zR_{q_i}\otimes P_i U_i &0\\0& W_i} \text{ on }\sbm{H^2\otimes \cF\\\cK_u}
\end{align}and $V_{(i)}$ is unitarily equivalent to
\begin{align}
\sbm{R_{\overline q_i}\otimes U_i^*P_i+R_{\overline q_i}M_z\otimes U_i^*P_i^\perp& 0\\0&W_{(i)}}  \text{ on }\sbm{H^2\otimes \cF\\\cK_u}.
\end{align}Moreover, the tuple $(\cF,\cK_u;P_i,U_i,W_i)_{i=1}^d$ can be chosen to be such that
\begin{align}\label{ExplctTuple1'}
\begin{cases}
&\cF=\cD_{V_1^*}\oplus\cD_{V_2^*}\oplus\cdots\oplus\cD_{V_d^*}, \quad \cK_u=\bigcap_{n\geq 0}(V_1V_2\cdots V_d)^n\cH,\\
& (W_1,W_2,\dots,W_d)=(V_1,V_2,\dots,V_d)|_{\cK_u},\quad P_i= \text{projection onto }\cD_{V_i^*},\text{ and}\\
& U_i: D_{V_i^*}\oplus\Delta_iD_{V_{(i)}^*}V_i^*\mapsto D_{V_i^*}V_{(i)}^*\oplus\Delta_iD_{V_{(i)}^*} \text{ for some unitary}\\ &\Delta_i:\cD_{V_{(i)}^*}\to\oplus_{i\neq j=1}^d\cD_{V_j^*} \text{ given explicitly in }\eqref{Delta} \text{ below};
\end{cases}
 \end{align}and
 \item {\bf BCL-2 $q$-model:} there exist Hilbert spaces $\cF_\dag$ and $\cK_{u\dag}$, projections $P_{i\dag}$ and a unitary $U_{i\dag}$ in $\cB(\cF_\dag)$, and a tuple $(W_{1\dag},W_{2\dag},\dots,W_{d\dag})$ of $q$-commutative unitaries in $\cB(\cK_{u\dag})$ such that for each $i=1,2,\dots,d$, $V_i$ is unitarily equivalent to
\begin{align}\label{iso-model2'}
\sbm{R_{q_i}\otimes U_{i\dag}^*P_{i\dag}^\perp + M_zR_{q_i}\otimes U_{i\dag}^*P_{i\dag}&0\\0&W_{i\dag}} \text{ on }\sbm{H^2\otimes \cF_\dag\\\cK_{u\dag}}
\end{align}and $V_{(i)}$ is unitarily equivalent to
\begin{align}
\sbm{R_{\overline q_i}\otimes P_{i\dag} U_{i\dag}+R_{\overline q_i}M_z\otimes P_{i\dag}^\perp U_{i\dag}&0\\0&W_{(i)\dag}}  \text{ on }\sbm{H^2\otimes \cF_\dag\\\cK_{u\dag}}.
\end{align}Moreover, the tuple $(\cF_\dag,\cK_{u\dag};P_{i\dag},U_{i\dag},W_{i\dag})_{i=1}^d$ can be chosen to be such that
\begin{align}\label{ExplctTuple2'}
(\cF_\dag,\cK_{u\dag};P_{i\dag},U_{i\dag},W_{i\dag})=(\cF,\cK_u;P_i,U_i^*,W_i)\text{ for each }i,
 \end{align}where $(\cF,\cK_u;P_i,U_i,W_i)_{i=1}^d$ is as in part (1) above.
\end{enumerate}
\end{thm}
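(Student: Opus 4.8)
The plan is to reduce everything to $d$ separate applications of the pair result Theorem~\ref{T:qBCL}, exploiting the observation recorded above that for each $i$ the pair $(V_i,V_{(i)})$ is $q_i$-commutative with $q_i$ as in \eqref{qi}. The only data that will be genuinely global are the Hilbert spaces $\cF=\bigoplus_{j=1}^d\cD_{V_j^*}$ and $\cK_u=\bigcap_{n\geq0}V^n\cH$, with $V:=V_1V_2\cdots V_d$, together with the unitary tuple $(W_1,\dots,W_d)$; the unitary equivalences appearing in \eqref{iso-model1'} are produced one pair at a time.

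First I would set up the unitary part. Since $q$-commutativity gives $V_iV_{(i)}=\lambda_i V$ for the unimodular scalar $\lambda_i:=\prod_{k<i}q(i,k)$, we have $\bigcap_n(V_iV_{(i)})^n\cH=\bigcap_nV^n\cH=\cK_u$ and $D_{(V_iV_{(i)})^*}=D_{V^*}$, so $\cK_u$ is exactly the unitary part handed over by Theorem~\ref{T:qBCL} for the pair $(V_i,V_{(i)})$; in particular $\cK_u$ reduces $V_i$ and $V_{(i)}$ and $V_i|_{\cK_u}$ is unitary. Letting $i$ vary, $\cK_u$ reduces every $V_j$, and $(W_1,\dots,W_d):=(V_1,\dots,V_d)|_{\cK_u}$ is a $q$-commutative tuple of unitaries, with $W_{(i)}:=W_1\cdots W_{i-1}W_{i+1}\cdots W_d$ the unitary part of $V_{(i)}$. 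On $\cH\ominus\cK_u$ every $V_iV_{(i)}$, and hence $V$, is a shift.

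Next, fix $i$ and apply Theorem~\ref{T:qBCL} to the $q_i$-commutative pair $(V_i,V_{(i)})$: this realizes $(V_i,V_{(i)})$ jointly as its BCL-1 $q_i$-model on $H^2\otimes(\cD_{V_i^*}\oplus\cD_{V_{(i)}^*})\oplus\cK_u$, with parameters read off from \eqref{ExplctTuple1} --- namely the projection $\tilde P_i$ of $\cD_{V_i^*}\oplus\cD_{V_{(i)}^*}$ onto $\cD_{V_i^*}$, the unitary $\tilde U_i: D_{V_i^*}h\oplus D_{V_{(i)}^*}V_i^*h\mapsto D_{V_i^*}V_{(i)}^*h\oplus D_{V_{(i)}^*}h$, and the unitary pair $(W_i,W_{(i)})$. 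To make the ambient space $i$-free I would identify $\cD_{V_{(i)}^*}$ with $\bigoplus_{j\ne i}\cD_{V_j^*}$: factoring $V_{(i)}$ into the $d-1$ isometries $\{V_j\}_{j\ne i}$ and iterating the orthogonal splitting $\operatorname{Ran}(AB)^{\perp}=\operatorname{Ran}(A)^{\perp}\oplus A\operatorname{Ran}(B)^{\perp}$ for isometries --- equivalently, iterating Lemma~\ref{justlikethat}(ii) --- produces a canonical unitary $\Delta_i:\cD_{V_{(i)}^*}\to\bigoplus_{j\ne i}\cD_{V_j^*}$. Transporting the pair model through $I_{H^2}\otimes(I_{\cD_{V_i^*}}\oplus\Delta_i)$ and reindexing the summands of $\cF$ turns it into exactly \eqref{iso-model1'}, together with its companion formula for $V_{(i)}$, with $P_i$ the projection of $\cF$ onto $\cD_{V_i^*}$ and $U_i$ the operator in \eqref{ExplctTuple1'}. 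Doing this for each $i$ gives Part~(1); Part~(2) follows by the identical argument starting from the BCL-2 $q_i$-model of Theorem~\ref{T:qBCL}, or directly from Remark~\ref{R:BCLtuples} applied pairwise, which sends each $U_i$ to $U_i^*$ and leaves $\cF,\cK_u,P_i,W_i$ unchanged, exactly as \eqref{ExplctTuple2'} records.

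The step needing care is this last transport: one must check that passing through $\Delta_i$ delivers $U_i$ in precisely the normalized shape of \eqref{ExplctTuple1'} and the model operator in precisely the shape \eqref{iso-model1'}, which is a matter of tracking, through the defect-space bookkeeping, the unimodular scalars $\lambda_i$ coming from $V_iV_{(i)}=\lambda_i V$ together with the explicit form \eqref{tau'} of the identifying unitary. There is no deeper obstacle --- Theorem~\ref{T:qBCL} already supplies the per-pair statement --- but one should note, as the paragraph preceding the theorem does, that unlike the case $d=2$ the $d$ unitary equivalences obtained for the various $i$ genuinely differ from one another, so $(\cF,\cK_u;P_i,U_i,W_i)$ need not arise from a single joint equivalence of the whole tuple; this is exactly why the operators assembled from such a data set in \eqref{iso-model1'} form a $q$-commutative tuple only when the $(P_i,U_i)$ satisfy further compatibility relations.
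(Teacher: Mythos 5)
Your proposal is correct and follows essentially the same route as the paper: apply Theorem~\ref{T:qBCL} to each $q_i$-commutative pair $(V_i,V_{(i)})$, observe that $V_iV_{(i)}$ is a unimodular multiple of $V$ so that $\cK_{iu}=\cK_u$ and $W_i'=W_{(i)}$, and identify $\cD_{V_{(i)}^*}$ with $\bigoplus_{j\neq i}\cD_{V_j^*}$ via a canonical unitary $\Delta_i$ before transporting the pair model. The only cosmetic difference is that the paper writes $\Delta_i$ explicitly as in \eqref{Delta} and verifies unitarity by a telescoping-norm and orthocomplement argument, whereas iterating Lemma~\ref{justlikethat}(ii) as you describe produces the splitting with the adjoints composed in the opposite order, so to land exactly on \eqref{Delta} one should iterate the first identification in Lemma~\ref{justlikethat}(i) (or note that either choice of $\Delta_i$ serves equally well).
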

\begin{proof}
As in the pair case, we only do the analysis for part $(1)$, as a similar analysis works for part (2). The first step is to fix $i=1,2,\dots, d$ and apply the implication $(1)\Rightarrow(2)$ of Theorem \ref{T:qBCL} to the $q_i$-commutative pair $(V_i,V_{(i)})$. This will give us Hilbert spaces $\cF_i$, $\cK_{iu}$, a projection $P_i$, a unitary $U_i$ in $\cB(\cF_i)$, and a pair $(W_{i},W_{i}')$ of $q_i$-commuting unitaries in $\cB(\cK_{iu})$ such that $(V_i,V_{(i)})$ is unitarily equivalent to
\begin{align}\label{iso-model1''}
\left(\sbm{R_{q_i}\otimes P_i^\perp U_i + M_zR_{q_i}\otimes P_i U_i &0\\0& W_{i}}, \sbm{R_{\overline q_i}\otimes U_i^*P_i+R_{\overline q_i}M_z\otimes U_i^*P_i^\perp& 0\\0&W_{i}'}\right) \text{ on }\sbm{H^2\otimes \cF_i\\\cK_{iu}},
\end{align}where by \eqref{ExplctTuple1} the parameters $(\cF_i,\cK_{iu};P_i,U_i,W_i,W_i')$ can be chosen to be
\begin{align}\label{ExplctTuple1''}
\begin{cases}
&\cF_i=\sbm{\cD_{V_i^*}\\\cD_{V_{(i)}^*}}, \quad \cK_{iu}=\bigcap_{n\geq 0}(V_iV_{(i)})^n\cH,\quad P_i:\sbm{f\\g}\mapsto\sbm{f\\0},\\
& U_i: \sbm{D_{V_i^*}\\D_{V_{(i)}^*}V_i^*}\mapsto \sbm{D_{V_i^*}V_{(i)}^*\\D_{V_{(i)}^*}}\text{ and } (W_i,W_i')=(V_{i},V_{(i)})|_{\cK_{iu}}.
\end{cases}
 \end{align}Let us first note that by definition of $V_{(i)}$ it follows that
$$
W_{i}'=\prod_{i\neq j=1}^dW_{j}=W_{(i)}.
$$Next we note that for each $i=1,2,\dots,d$,
\begin{align*}
\cK_{iu}=\bigcap_{n\geq0}(V_iV_{(i)})^n\cH=q(i,1)q(i,2)\cdots q(i,i-1)\bigcap_{n\geq0}V^n\cH=:\cK_u,
\end{align*}where $V=V_1V_2\cdots V_d$ and we used the fact that for every $i$,
\begin{align*}
V_iV_{(i)}=V_iV_1V_2\cdots V_{i-1}V_{i+1}\cdots V_d=q(i,1)q(i,2)\cdots q(i,i-1) V.
\end{align*}We next argue that for each $i=1,2,\dots,d$, $\cF_i=\cD_{V_1^*}\oplus\cD_{V_2^*}\oplus\cdots\oplus\cD_{V_d^*}$. By the expression of $\cF_i$ as given in \eqref{ExplctTuple1''}, this will be achieved if we can show that
\begin{align}\label{aux4}
\cD_{V_{(i)}^*} \text{ is unitarily equivalent to }\oplus_{i\neq j=1}^d\cD_{V_j^*}.
\end{align}For \eqref{aux4}, we define the map $\Delta_i:\cD_{V_{(i)}^*}\to \oplus_{i\neq j=1}^d\cD_{V_j^*}$ by
\begin{align}\label{Delta}
\notag\Delta_i:D_{V_{(i)}^*}h\mapsto
D_{V_1^*}V_2^*\cdots V_{i-1}^*V_{i+1}^*\cdots V_d^*h &\oplus D_{V_2^*}V_3^* \cdots V_{i-1}^*V_{i+1}^*\cdots V_d^*h\\
&\oplus\cdots\oplus D_{V_{d-1}^*}V_{d}^*h\oplus D_{V_d^*}h.
\end{align}Using the general fact that for a contraction $T$, $\|D_Th\|^2=\|h\|^2-\|Th\|^2$, we see that
\begin{align*}
\|D_{V_1^*}V_2^*\cdots V_{i-1}^*V_{i+1}^*\cdots V_d^*h\|^2 &+ \|D_{V_2^*}V_3^* \cdots V_{i-1}^*V_{i+1}^*\cdots V_d^*h\|^2\\
&+\cdots+\|D_{V_{d-1}^*}V_{d}^*h\|^2+\|D_{V_d^*}h\|^2
\end{align*}is a telescopic sum and is equal to
\begin{align*}
\|h\|^2-\|V_1^*V_2^*\cdots V_{i-1}^*V_{i+1}^*\cdots V_d^*h\|^2=\|D_{V_{(i)}^*}h\|^2.
\end{align*}Therefore $\Delta_i$ is an isometry. We claim that
\begin{align*}
\{D_{V_1^*}V_2^*\cdots V_{i-1}^*V_{i+1}^*\cdots V_d^*h &\oplus D_{V_2^*}V_3^* \cdots V_{i-1}^*V_{i+1}^*\cdots V_d^*h\\
&\oplus\cdots\oplus D_{V_{d-1}^*}V_{d}^*h\oplus D_{V_d^*}h:h\in\cH\}=\oplus_{i\neq j=1}^d\cD_{V_j^*}.
\end{align*}We follow the same technique as used to prove Lemma \ref{justlikethat}: we show that the orthocomplement of the space on the left-hand side in $\oplus_{i\neq j=1}^d\cD_{V_j^*}$ is zero. Let $\oplus_{i\neq j=1}^d f_j\in\oplus_{i\neq j=1}^d\cD_{V_j^*}$ be such that for every $h\in\cH$,
\begin{align*}
0=\langle \oplus_{i\neq j=1}^df_j,D_{V_1^*}V_2^*\cdots V_{i-1}^*V_{i+1}^*\cdots V_d^*h &\oplus D_{V_2^*}V_3^* \cdots V_{i-1}^*V_{i+1}^*\cdots V_d^*h\\
&\oplus\cdots\oplus D_{V_{d-1}^*}V_{d}^*h\oplus D_{V_d^*}h\rangle.
\end{align*}This implies that for every $h\in\cH$
\begin{align*}
\langle h, f_d+V_df_{d-1}+\cdots+V_dV_{d-1}\cdots V_{i+1}V_{i-1}\cdots V_3f_2+V_dV_{d-1}\cdots V_{i+1}V_{i-1}\cdots V_2f_1\rangle =0,
\end{align*}which means that
$$
f_d+V_df_{d-1}+\cdots+V_dV_{d-1}\cdots V_{i+1}V_{i-1}\cdots V_3f_2+V_dV_{d-1}\cdots V_{i+1}V_{i-1}\cdots V_2f_1=0.
$$Since $D_{V_d^*}f_d=f_d$ and $D_{V_d^*}V_d=0$, we conclude by applying $D_{V_d^*}$ on the vector above that $f_d=0$. A similar analysis yields that each of the vectors $f_{d-1},\dots,f_{i+1},f_{i-1},\dots, f_1$ are zero vectors. Consequently, $\Delta_i$ is a unitary. Hence claim \ref{aux4} is proved.
\end{proof}

\begin{remark}
As in the pair case, for a tuple of $q$-commutative isometries, the BCL $q$-tuples  uniquely determine a tuple of $q$-commutative isometries in the sense that is explained for the pair case in the statement of Theorem \ref{T:uniqueness}. The proof is similar.
\end{remark}

%Let us also remark that if $(V_1,V_2,\dots,V_d)$ is a doubly $q$-commutative tuple of operators, i.e., if $V_jV_i^*=q(i,j)V_i^*V_j$, then for each $i=1,2,\dots,d$, the pair $(V_i,V_{(i)})$ is doubly $\overline q_i$-commutative, where $q_i$ is as defined in \eqref{qi}. Indeed,
%\begin{align*}
%V_{(i)}V_i^*&=V_1V_2\cdots V_{i-1}V_{i+1}\cdots V_d V_i^*\\
%&=q(i,d)\cdots q(i,i+1)q(i,i-1)\cdots q(i,1) V_i^* V_1V_2\cdots V_{i-1}V_{i+1}\cdots V_d=\overline q_iV_i^*V_{(i)}.
%\end{align*}
%It is therefore possible to obtain results parallel to \S \ref{S:dqcomm} for a tuple of doubly $q$-commutative isometries; one just apply the results for the pair case to each of the pairs $(V_i,V_{(i)})$. However, since the pairs  that the results obtained in the pair case From Theorem \ref{T:qdoublycomm} therefore, one can easily read off the following result by simply applying Theorem \ref{T:qdoublycomm} to each of the pairs $(V_i,V_{(i)})$.
%\begin{thm}\label{T:bqdoublycomm}
%Let $(V_1,V_2,\dots,V_d)$ be a tuple of $q$-commutative isometries with the tuples $(\cF,\cK_u;P_i,U_i,W_{i})_{i=1}^d$ and $(\cF_\dag,\cK_{u\dag};P_{i\dag},U_{i\dag},W_{i\dag})_{i=1}^d$ as the BCL-1 and BCL-2 $q$-tuples, respectively. If $(V_1,V_2,\dots,V_d)$ is doubly $q$-commutative, then
%\begin{enumerate}
%\item $P_iU_iP_i^\perp=0 \text{ for each }i=1,2,\dots,d$; \text{ and}
%\item $P_{i\dag}^\perp U_{i\dag} P_{i\dag}=0 \text{ for each }i=1,2,\dots,d.$
%\end{enumerate}
%\end{thm}

\section{$q$-commutative unitary extension of $q$-commutative isometries}\label{S:Extension}
Just as in the commutative case, every $q$-commutative tuple of isometries can be extended to a $q$-commutative tuple of unitaries. Moreover, as the following theorem shows, this unitary extension can be made so as to have some additional structure.
\begin{thm}\label{T:Ext}
Every $d$-tuple $(X_1,X_2,\dots,X_d)$ of $q$-commutative isometric operators has a $q$-commutative unitary extension $(Y_1,Y_2,\dots,Y_d)$. Moreover, there is an extension $(Y_1,Y_2,\dots,Y_d)$ such that $Y=Y_1Y_2\cdots Y_d$ is the minimal unitary extension of $X=X_1X_2\dots X_d$.
\end{thm}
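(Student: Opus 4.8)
The plan is to work inside the Berger--Coburn--Lebow model furnished by Theorem~\ref{T:bqBCL} and then enlarge its shift part by replacing $H^2$ with $L^2=L^2(\mathbb T)$. After passing to a unitarily equivalent tuple I may assume, by part~(1) of Theorem~\ref{T:bqBCL}, that $\cH=(H^2\otimes\cF)\oplus\cK_u$ and, for each $i=1,\dots,d$,
\[
X_i=\sbm{M_{\Phi_i}(R_{q_i}\otimes I_\cF)&0\\0&W_i},\qquad\Phi_i(z):=(P_i^\perp+zP_i)U_i,
\]
(one has $R_{q_i}\otimes P_i^\perp U_i+M_zR_{q_i}\otimes P_iU_i=M_{\Phi_i}(R_{q_i}\otimes I_\cF)$), where $(W_1,\dots,W_d)$ is a $q$-commutative tuple of unitaries on $\cK_u$. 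For $|z|=1$ the operator $P_i^\perp+zP_i$ is unitary, so $\Phi_i$ is a $\cB(\cF)$-valued inner function; hence $M_{\Phi_i}$ is a \emph{unitary} on $L^2\otimes\cF$ leaving $H^2\otimes\cF$ invariant (it is multiplication by a degree-one operator polynomial), while $R_{q_i}$ extends by the same formula to a unitary on $L^2$, so $R_{q_i}\otimes I_\cF$ is a unitary on $L^2\otimes\cF$ leaving $H^2\otimes\cF$ invariant. I therefore propose
\[
Y_i:=\sbm{M_{\Phi_i}(R_{q_i}\otimes I_\cF)&0\\0&W_i}\quad\text{on }(L^2\otimes\cF)\oplus\cK_u.
\]
Each $Y_i$ is unitary, the subspace $(H^2\otimes\cF)\oplus\cK_u\cong\cH$ is $Y_i$-invariant, and $Y_i|_\cH=X_i$, so $(Y_1,\dots,Y_d)$ is a tuple of unitaries extending $(X_1,\dots,X_d)$ provided its $q$-commutativity is checked.

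Establishing the $q$-commutativity of $(Y_1,\dots,Y_d)$ is the one genuinely delicate step, because the relations $X_iX_j=q(i,j)X_jX_i$ hold only on $H^2\otimes\cF$, which is \emph{not} dense in $L^2\otimes\cF$, so a limiting argument is unavailable. The way around this is to conjugate the rotations past the multiplications via the identity $(R_q\otimes I_\cF)M_\phi=M_{\phi(q\,\cdot\,)}(R_q\otimes I_\cF)$, valid on all of $L^2\otimes\cF$. Carrying this out gives, with $m_q(z):=qz$,
\[
Y_iY_j=\sbm{M_{\Phi_i\cdot(\Phi_j\circ m_{q_i})}(R_{q_iq_j}\otimes I_\cF)&0\\0&W_iW_j},
\]
and similarly with $i$ and $j$ interchanged. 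Since $R_{q_iq_j}\otimes I_\cF$ is invertible and $(W_1,\dots,W_d)$ is already $q$-commutative, the relation $Y_iY_j=q(i,j)Y_jY_i$ is \emph{equivalent} to the pointwise identity of $\cB(\cF)$-valued trigonometric polynomials
\[
\Phi_i(z)\,\Phi_j(q_iz)=q(i,j)\,\Phi_j(z)\,\Phi_i(q_jz)\qquad\text{for a.e.\ }z\in\mathbb T.
\]
But the identical computation on $H^2\otimes\cF$ shows that this same polynomial identity also encodes $X_iX_j=q(i,j)X_jX_i$, which holds by hypothesis; and a $\cB(\cF)$-valued polynomial whose multiplication operator annihilates $H^2\otimes\cF$ must vanish identically (evaluate on constant vectors). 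Hence the displayed identity holds and $(Y_1,\dots,Y_d)$ is a $q$-commutative tuple of unitaries extending $(X_1,\dots,X_d)$.

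For the ``moreover'' part, put $Y:=Y_1\cdots Y_d$ and $X:=X_1\cdots X_d$. Collapsing all the rotation factors as above, $Y=M_\Theta\oplus(W_1\cdots W_d)$ for some inner $\Theta$; the rotation factor disappears since $\prod_{i=1}^dq_i=\prod_{i,j=1}^dq(i,j)=1$, using $q(i,i)=1$ and $q(i,j)q(j,i)=1$. Restricting $Y$ to the $Y$-invariant subspace $(H^2\otimes\cF)\oplus\cK_u\cong\cH$ returns $X$; in particular $M_\Theta|_{H^2\otimes\cF}=X|_{H^2\otimes\cF}$, which by \eqref{aux1} applied to the $q_1$-commutative pair $(X_1,\,X_2\cdots X_d)$ (whose BCL-$1$ $q$-model is \eqref{iso-model1'} for $i=1$) equals $M_z\otimes I_\cF$, so $X$ acts on $(H^2\otimes\cF)\oplus\cK_u$ as $(M_z\otimes I_\cF)\oplus W$ with $W:=W_1\cdots W_d$ and $\cK_u=\bigcap_{n\ge0}X^n\cH$. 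Therefore $\Theta(z)=zI_\cF$ a.e., so $Y=(M_z\otimes I_\cF)\oplus W$, where now $M_z\otimes I_\cF$ is the \emph{bilateral} shift on $L^2\otimes\cF$. Finally $\bigvee_{n\ge0}Y^{*n}\cH=\big(\bigvee_{n\ge0}\overline{z}^{\,n}(H^2\otimes\cF)\big)\oplus\cK_u=(L^2\otimes\cF)\oplus\cK_u$, and the minimal unitary extension of a direct sum of a unilateral shift and a unitary is the direct sum of the bilateral shift of the same multiplicity with that unitary; hence $Y$ is precisely the minimal unitary extension of $X=X_1\cdots X_d$. Apart from the $q$-commutativity step, all remaining verifications are routine computations with the model.
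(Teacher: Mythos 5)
Your proof is correct and takes essentially the same route as the paper's: pass to the BCL-1 $q$-model of Theorem \ref{T:bqBCL}, extend each model operator to $(L^2\otimes\cF)\oplus\cK_u$ by the same formula, and identify $X$ and $Y$ as a unilateral (resp.\ bilateral) shift plus the same unitary, invoking the classical minimality of the bilateral shift extension. The only difference is that you flesh out the two steps the paper labels routine — the $q$-commutativity of $(Y_1,\dots,Y_d)$, via reduction to an operator-polynomial symbol identity already forced by the relations on $H^2\otimes\cF$, and the span computation $\bigvee_{n\geq 0}Y^{*n}\cH=(L^2\otimes\cF)\oplus\cK_u$ — both of which are carried out correctly.
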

\begin{proof}
Let us suppose without loss of generality that the $q$-commutative isometric tuple $(X_1,X_2,\dots,X_d)$ is given exactly in the BCL-1 $q$-model \eqref{iso-model1'}. Consider the tuple $(Y_1,Y_2,\dots,Y_d)$ given for each $i=1,2,\dots, d$, by
\begin{align}\label{uni-model2}
Y_i=\sbm{R_{q_i}\otimes P_i^\perp U_i + M_\zeta R_{q_i}\otimes P_i U_i &0\\0& W_i} \text{ on }\sbm{L^2\otimes \cF\\\cK_u}.
\end{align}Here $L^2$ denotes the usual $L^2$ space over $\mathbb T$ with respect to the arc-length measure. It is a routine computation that the tuple $(Y_1,Y_2,\dots,Y_d)$ above is a $q$-commutative tuple of unitary operators. Moreover, it extends the model in \eqref{iso-model2} in view of the natural embedding of $(H^2\otimes \cF)\oplus \cK_u$ into $(L^2\otimes\cF)\oplus \cK_u$:
$$
\sbm{z^n\otimes \xi\\ \eta} \mapsto \sbm{\zeta^n\otimes \xi\\ \eta} \text{ for }\xi\in\cF,\;\eta\in\cK_u \text{ and }n\geq 0.
$$

For the second part of the lemma, we note that
$$X=X_1X_2\cdots X_d=X_1X_{(1)}=M_z\oplus W_{1}W_{2}\cdots W_d\text{ on $H^2(\cF_\dag)\oplus \cK_u$ }$$ and
$$Y=Y_1Y_2\cdots Y_d=M_\zeta\oplus W_1W_2\cdots W_d\text{ on $L^2(\cF_\dag)\oplus \cK_u$}.$$ Therefore it follows from the classical theory that $Y$ as above is indeed the minimal unitary extension of $X$.
\end{proof}
Let us say that a $q$-commutative tuple $(X_1,X_2,\dots,X_d)$ is doubly $q$-commutative, if in addition, it satisfies
$$
X_jX_i^*=q(i,j)X_i^*X_j\quad \mbox{for each }i,j=1,2,\dots,d.
$$As in the pair case, a $q$-commutative tuple of unitaries is automatically doubly $q$-commutative. A doubly $q$-commutative version of Theorem \ref{T:Ext} can be easily derived. 
\begin{corollary}[{\rm See also \S 6 of \cite{Jeu_Pinto}}]
Every doubly $q$-commutative tuple of isometries extends to a doubly $q$-commutative tuple of unitaries.
\end{corollary}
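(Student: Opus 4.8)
The plan is to read the corollary off Theorem~\ref{T:Ext} together with one elementary algebraic observation, so that essentially nothing new has to be proved. Let $(X_1,\dots,X_d)$ be a doubly $q$-commutative tuple of isometries on a Hilbert space $\cH$. In particular it is $q$-commutative, so Theorem~\ref{T:Ext} furnishes a Hilbert space $\cK\supseteq\cH$ and a $q$-commutative tuple $(Y_1,\dots,Y_d)$ of \emph{unitaries} on $\cK$ extending it (and one may even take $Y_1\cdots Y_d$ to be the minimal unitary extension of $X_1\cdots X_d$, although this refinement is not needed for the present statement).

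It then remains only to check that a $q$-commutative tuple of unitaries is automatically doubly $q$-commutative. Multiplying $Y_iY_j=q(i,j)Y_jY_i$ on the left and on the right by $Y_i^*$ and using $Y_i^*Y_i=Y_iY_i^*=I$ gives $Y_jY_i^*=q(i,j)Y_i^*Y_j$ for all $i,j$. Hence $(Y_1,\dots,Y_d)$ is a doubly $q$-commutative tuple of unitaries extending $(X_1,\dots,X_d)$, which is exactly the assertion. This two-line computation also accounts for the parenthetical remark ``(and hence doubly $q$-commutative)'' in the Introduction, and it shows that the corollary is in fact strictly weaker than Theorem~\ref{T:Ext}: the hypothesis that the $X_i$ be \emph{doubly} $q$-commutative is never used --- plain $q$-commutativity suffices --- which is precisely why the present route both recovers and improves the result of \cite{Jeu_Pinto}.

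I do not anticipate any genuine obstacle here; all the substantive work has already been absorbed into the BCL $q$-model (Theorem~\ref{T:qBCL}) and into the unitary-extension construction~\eqref{uni-model2} inside the proof of Theorem~\ref{T:Ext}. The one point that deserves a line of care --- and it is already implicit in that proof --- is that \eqref{uni-model2} genuinely \emph{extends} \eqref{iso-model1'}: the subspace $(H^2\otimes\cF)\oplus\cK_u$ is invariant under each $Y_i$ and $Y_i$ restricts there to the operator appearing in \eqref{iso-model1'}, because $R_{q_i}$ and $M_\zeta$ carry $H^2$ into $H^2$ while $W_i$ preserves $\cK_u$, so the embedding $\sbm{z^n\otimes\xi\\\eta}\mapsto\sbm{\zeta^n\otimes\xi\\\eta}$ intertwines the two tuples.
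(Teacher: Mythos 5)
Your proof is correct and is essentially the paper's own argument: invoke Theorem \ref{T:Ext} for a $q$-commutative unitary extension and then observe (by multiplying $Y_iY_j=q(i,j)Y_jY_i$ by $Y_i^*$ on both sides) that a $q$-commutative tuple of unitaries is automatically doubly $q$-commutative. The extra remarks about the embedding in \eqref{uni-model2} merely re-verify a step already contained in the proof of Theorem \ref{T:Ext} and are harmless.
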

\begin{proof}
This follows from Theorem \ref{T:Ext} and the fact that a $q$-commutative tuple of unitaries is doubly $q$-commutative.
\end{proof}

\section{Models for $q$-commutative contractions}
Let $(T_1,T_2)$ be a pair of operators acting on a Hilbert space $\cH$. Let us call a pair $(U_1,U_2)$ of operators acting on $\cK\supset\cH$ a {\em dilation} of $(T_1,T_2)$, if
$$
T_1^mT_2^n=P_{\cH}U_1^mU_2^n|_{\cH} \text{ for every non-negative integers $m$ and $n$},
$$where $P_{\cH}$ is the orthogonal projection of $\cK$ onto $\cH$. And\^o's dilation theorem \cite{ando} states that every pair of commutative Hilbert space operators has a dilation to a pair of commutative unitary operators. Thus, a natural generalization of And\^o's dilation theorem is whether every $q$-commutative pair of contractions has a dilation to a $q$-commutative unitary operators. This question is beautifully answered in affirmative very recently in \cite{KM2019} using a commutant lifting approach. In an upcoming paper, we plan to give two constructive proofs of this {\em$q$-dilation} theorem and use the Berger--Coburn--Lebow-type model proved in this paper to consequently produce functional models for $q$-commutative pairs of contractions; the $q=1$ case is done in \cite{sauAndo}.

\end{document}